\documentclass[12pt,draft]{amsart}
\usepackage{amssymb}
\usepackage{latexsym}
\usepackage{amsfonts}
\usepackage{amsmath}
\usepackage{color}

 
\usepackage{blindtext}

\oddsidemargin 0pt
\evensidemargin 0pt
\textheight 8.1in \textwidth 6.3in

\relpenalty=10000
\binoppenalty=10000
\tolerance=500

\newtheorem{theorem}{Theorem}[section]
\newtheorem{proposition}[theorem]{Proposition}
\newtheorem{lemma}[theorem]{Lemma}
\newtheorem{corollary}[theorem]{Corollary}

\newtheorem{conjecture}[theorem]{Conjecture}

\theoremstyle{definition}

\mathsurround=1pt
\headheight 14pt
\parskip 2pt

\def\w{{\omega}}

\def\la{{\langle}}
\def\ra{{\rangle}}
\def\char{{\rm char \,}}

\def\b{\beta}

\renewcommand{\leq}{\leqslant}
\renewcommand{\geq}{\geqslant}

\newcommand{\aut}[1]{{\sf Aut}(#1)}

\newcommand{\lie}[1]{{\mathcal L}_{#1}}
\newcommand{\liealg}[2]{{\mathcal L}_{#1,#2}}
\newcommand{\asso}[1]{\mathcal{A}_{#1}}
\newcommand{\assoalg}[2]{{\mathcal A}_{#1,#2}}

\newcommand{\F}{\mathbb F}

\newcommand{\m}{\mathfrak m}
\newcommand{\n}{\mathfrak n}
\newcommand{\J}{\mathfrak J}

\newcommand{\ad}{{\rm ad}}

\newcommand{\gl}{{\mathfrak {gl}}}

\DeclareMathOperator{\ch}{char}
\DeclareMathOperator{\reg}{reg}

\begin{document}

\title[]{The isomorphism problem for universal enveloping algebras of
four-dimensional solvable Lie algebras}
\author{Jos\'e L. Vilca Rodr\'\i guez}
\address[Rodr\'\i guez]{Departamento de Matem\'atica\\
Instituto de Ci\^encias Exatas\\
Universidade Federal de Minas Gerais\\
Av.\ Ant\^onio Carlos 6627\\
Belo Horizonte, MG, Brazil. Email:
{\rm\texttt{vilca.rodriguez4@gmail.com}}}

\author{Csaba Schneider}
\address[Schneider]{Departamento de Matem\'atica\\
Instituto de Ci\^encias Exatas\\
Universidade Federal de Minas Gerais\\
Av.\ Ant\^onio Carlos 6627\\
Belo Horizonte, MG, Brazil. Email:
{\rm\texttt{csaba@mat.ufmg.br}}, URL:
{\tt
  {http://www.mat.ufmg.br/$\sim$csaba}}}

\author{Hamid Usefi}
\address[Usefi]{Department of Mathematics and Statistics,
Memorial University of Newfoundland,
St. John's, NL,
Canada, 
A1C 5S7}
\email{usefi@mun.ca}

\begin{abstract}
  This paper is a contribution to the isomorphism problem for universal
  enveloping algebras of finite-dimensional Lie algebras.
  We focus on
  solvable Lie algebras of small dimensions over fields of arbitrary
  characteristic.
 We  prove, over an arbitrary field,
  that the isomorphism type of a
  metabelian Lie algebra whose derived subalgebra has codimension one
  is  determined by its universal enveloping algebra. As an application
  of the results in this paper, we solve the isomorphism problem for
  solvable Lie algebras of dimension four over fields of characteristic
  zero and also point out the problems that occur in prime characteristic.
\end{abstract}

\date{16 March 2019}
\keywords{Lie algebras, solvable Lie algebras, metabelian Lie algebras,
  universal enveloping algebras, isomorphism problem,
  PBW Theorem, Frobenius semiradical, group rings}

\subjclass[2010]{17B35, 17B30}

\maketitle

\section{Introduction}\label{section1}

The main results of this paper make a contribution to the study of the isomorphism
problem for universal enveloping algebras of Lie algebras which asks if
an isomorphism $U(L)\cong U(H)$ between the universal enveloping algebras
of two Lie algebras $L$ and $H$ implies the isomorphism $L\cong H$.
 It is known that the answer to this question is, in general, ``no'' and
explicit examples of non-isomorphic
finite-dimensional Lie algebras with isomorphic universal enveloping algebras
were presented in~\cite{Usefi,Csaba}. On the other hand, all such
known examples are nilpotent and are defined over fields of characteristic $p$
with small  $p$ (in the sense that $p<\dim L$). Moreover, the known
results show that
the isomorphism problem does have a positive solution in several natural
and important classes
of finite-dimensional Lie algebras.
Indeed, over an arbitrary field of characteristic different from~2,
the isomorphism problem was solved for the class of $3$-dimensional
Lie algebras
by Malcolmson \cite{Malcolmson}
and by Chun, Kajiwara and Lee~\cite{Chun}.

The recent
paper~\cite{quasiisom} by Campos, Petersen, Robert-Nicoud, and Wierstra
announced the positive solution of the isomorphism problem for a large class
of finite-dimensional Lie algebras
including finite-dimensional nilpotent Lie algebras over fields
of characteristic zero. The proof of this result relies on the deep theory of
quasi-isomorphism and Koszul duality. The isomorphism problem for small-dimensional nilpotent Lie algebras over fields of arbitrary characteristic was
also addressed using computational techniques  by the last two
   authors~\cite{Csaba}. 
A detailed  treatment of the isomorphism problem can be found
in the article by Riley and the third author~\cite{Usefi}  and in the
survey~\cite{usefi-survey}.

It might be worth noting the analogies with a similar isomorphism  problem for integral group rings where a positive solution for the class of all nilpotent groups was given independently in \cite{RS} and
\cite{W}. There exists, however, a pair of non-isomorphic finite solvable groups of derived length 4 whose integral group rings are isomorphic  (see \cite{H}).  In view of Cartier-Kostant-Milnor-Moore theorem,  enveloping algebras and group algebras are building blocks of cocommutative Hopf algebras, so one might be able to view these problems in this broader context.

In analogy with group rings and in light of a recent positive solution in the class of finite-dimensional nilpotent Lie algebras, one may ask whether there is a counterexample of solvable Lie algebras for the isomorphism problem. So, in this paper we focus on solvable Lie algebras. We study  the isomorphism problem for finite-dimensional but not necessarily nilpotent Lie algebras and we mostly focus on techniques that are valid over fields of arbitrary characteristic. After the preliminary
Section~\ref{section:prelim} in which we summarize the required tools for the
rest of the paper, 
in Section~\ref{sec:metabelian}, we prove the following result concerning
a class of metabelian Lie algebras over arbitrary fields.
This can be viewed as an adaptation of the corresponding theorem on group rings 
by Whitcomb~\cite{Whitcomb} (see also  \cite[Theorem  9.3.13]{Polcino}).

\begin{theorem}\label{PropIdeal}
 Let $L=M\rtimes \langle x\rangle$ and $H=N\rtimes\langle y\rangle$ be finite-dimensional Lie algebras over an arbitrary field, where $M$ and $N$ are ideals of $L$ and $H$, respectively. Suppose that $\alpha: U(L)\rightarrow U(H)$ is an algebra isomorphism such that $\alpha(MU(L))=NU(H)$. Then, $L/M'\cong H/N'$.  
\end{theorem}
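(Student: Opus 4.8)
The plan is to replace $\alpha$ by a normalized isomorphism and then to extract the pair $\bigl(M/M',\ \ad x|_{M/M'}\bigr)$ as an invariant of $U(L)$ together with the ideal $\m:=MU(L)$. Write $\n:=NU(H)$ and let $\w(L),\w(H)$ be the augmentation ideals. Since $L/M=\la x\ra$ is one-dimensional, the isomorphism $U(L)/\m\cong U(H)/\n$ induced by $\alpha$ is an isomorphism $\F[x]\to\F[y]$ of polynomial rings, hence of affine form $x\mapsto ay+b$ with $a\neq 0$. Composing $\alpha$ with the automorphism of $U(H)$ induced by a suitable character $H\to\F$ trivial on $N$ (such characters exist because $H'\su N$, and the resulting automorphisms preserve $\n$), I may assume $b=0$; then $\alpha(\m)=\n$ is retained, $\alpha(\w(L))=\w(H)$, and $\alpha(x)\equiv ay\pmod\n$.

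Next I would locate $M/M'$ inside $U(L)$. Using $MU(L)=U(L)M$ and $\m^2=M^2U(L)\supseteq M'U(L)$, a PBW/filtration computation should give a natural isomorphism $\m/\m^2\cong (M/M')\otimes_\F U(L/M)$ of right $U(L/M)$-modules, under which $\overline{mx^j}\mapsto \bar m\otimes x^j$. Factoring out the image of $\m\,\w(L)$, which corresponds to $(M/M')\otimes x\F[x]$, then yields
\[
\m/(\m^2+\m\,\w(L))\cong M/M'.
\]
Moreover the inner derivation $\ad x=L_x-R_x$ of $U(L)$ preserves $\m$, $\m^2$ and $\w(L)$, so it descends to this quotient, and a direct calculation identifies the descended operator with $\ad x|_{M/M'}$. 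The crucial observation is that $\ad\xi=0$ on $\m/\m^2$ for every $\xi\in\m$ (both $\xi\eta$ and $\eta\xi$ lie in $\m^2$), so the operator induced on $M/M'$ by $\ad z$ depends only on the class of $z$ modulo $\m$.

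Because $\alpha$ is an algebra isomorphism with $\alpha(\m)=\n$ and $\alpha(\w(L))=\w(H)$, it carries the invariant $\m/(\m^2+\m\,\w(L))$ onto $\n/(\n^2+\n\,\w(H))$, producing a linear isomorphism $\psi\colon M/M'\to N/N'$. Since $\alpha\circ\ad x=\ad(\alpha(x))\circ\alpha$ and $\alpha(x)\equiv ay\pmod\n$, the observation above forces $\psi$ to intertwine $D:=\ad x|_{M/M'}$ with $\ad(ay)|_{N/N'}=a\,E$, where $E:=\ad y|_{N/N'}$; that is, $\psi D=aE\psi$. Writing $L/M'=(M/M')\rtimes\la\bar x\ra$ and $H/N'=(N/N')\rtimes\la\bar y\ra$ with abelian ideals, I would then define $f\colon L/M'\to H/N'$ by $f|_{M/M'}=\psi$ and $f(\bar x)=a\bar y$. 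The identity $\psi D=aE\psi$ is precisely what checks $f([\bar x,\bar m])=[f(\bar x),f(\bar m)]$, the only nontrivial brackets, so $f$ is a Lie algebra isomorphism and $L/M'\cong H/N'$.

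The \emph{main obstacle} is the second step: establishing rigorously that $\m/(\m^2+\m\,\w(L))\cong M/M'$ and that $\ad x$ descends to $\ad x|_{M/M'}$. This requires controlling how the relations of $U(L)$ (in particular the containment $M'U(L)\su\m^2$) interact with both the $\m$-adic filtration and the augmentation filtration, which is where finite-dimensionality and the PBW theorem enter. By contrast, the normalization in the first step and the final assembly of $f$ are routine once this module computation is in hand.
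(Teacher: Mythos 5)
Your proposal is correct in substance and it takes a genuinely different route from the paper's proof; moreover, the step you flag as the ``main obstacle'' is not an obstacle at all --- it follows in a few lines from Lemma~\ref{lemmadixmer}(3), which is the very lemma the paper's own proof turns on. Indeed, since $\m=MU(L)\su\w(L)$ you get $\m^2\su\m\,\w(L)$, and $\m\,\w(L)=MU(L)\w(L)=M\w(L)$ because $U(L)\w(L)=\w(L)$; also $\m=M(\F+\w(L))=M+M\w(L)$. Hence
\[
\m/(\m^2+\m\,\w(L))=(M+M\w(L))/M\w(L)\cong M/(M\cap M\w(L))=M/M',
\]
the last equality because $M\cap M\w(L)=M\cap\bigl(L\cap M\w(L)\bigr)=M\cap M'=M'$ by Lemma~\ref{lemmadixmer}(3). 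No tensor decomposition of $\m/\m^2$ over $U(L/M)$ is needed. The descent of $\ad x$ is equally immediate: $\ad x$ is a derivation of $U(L)$ preserving $\m$ and $M\w(L)$, and on the image of $M$ it acts as $\ad x|_{M/M'}$ since $[x,M]\su M$. Your normalization also checks out: the translation automorphism attached to a character vanishing on $N$ fixes $N$ pointwise, hence preserves $\n$, and after the twist $\alpha(\w(L))=\n+\alpha(x)U(H)=\n+ayU(H)=\w(H)$. With these points settled, your observation that $\ad\xi$ induces $0$ for $\xi\in\m$, the intertwining relation $\psi D=aE\psi$, and the final assembly of $f$ are all correct.

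As for the comparison: the paper normalizes with Lemma~\ref{lem:alphabar} and then produces, for every $z\in L$, a representative $y_z\in H$ with $\alpha(z)\equiv y_z\pmod{N\w(H)}$ (the case $z=x$ via the polynomial-ring argument, the case $z\in M$ via $\alpha(z)\in NU(H)=N+N\w(H)$); the assignment $z+M'\mapsto y_z+N'$ is then an injective Lie homomorphism $L/M'\to H/N'$, and surjectivity is forced by a dimension count that invokes Lemma~\ref{lemmausefi2}(6). You never map $L$ into $H$: you transport the invariant $\bigl(M/M',\,\ad x|_{M/M'}\bigr)$, realized inside $U(L)$ as the section $\m/(\m^2+\m\,\w(L))$ with its induced $\ad$-action, and you rebuild the isomorphism abstractly from $\psi$ and the intertwining identity. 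This buys bijectivity for free, with no appeal to Lemma~\ref{lemmausefi2}(6) (which the paper imports from~\cite{Usefi}), at the cost of the bookkeeping needed to see that the induced operator depends only on $\alpha(x)$ modulo $\n$. It is worth noting that your invariant $MU(L)/M\w(L)$, acted on by $U(L)/MU(L)$, is exactly the module $\widetilde I$ with the action $\beta$ that the paper builds in Proposition~\ref{prop:Ltilde} and exploits in Theorem~\ref{lemma6} (there $M$ is assumed abelian, so $\widetilde I\cong M$; in your generality $\widetilde I\cong M/M'$). So your argument is in effect a hands-on instance of the Section~\ref{sec:ltilde} machinery, rather than of the Whitcomb-style proof the paper actually gives for Theorem~\ref{PropIdeal}.
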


The other main result of Section~\ref{sec:metabelian} is the following
theorem which is obtained using the 
theory developed  by Elashvili and Ooms~\cite{Ooms2,Ooms1} concerning the
relationship between  the center $Z(U(L))$ and the Frobenius semiradical of~$L$.

\begin{theorem}\label{th:lprimezl}
  Suppose that $L$ and $H$ are  finite-dimensional metabelian Lie algebras over a field $\F$ of characteristic zero, and that $L=(L'+Z(L))\rtimes\langle x\rangle$ and $H=(H'+Z(H))\rtimes\langle y\rangle$. If $U(L)\cong U(H)$, then $L\cong H$. 
\end{theorem}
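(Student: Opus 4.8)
The plan is to feed the given isomorphism directly into Theorem~\ref{PropIdeal}. Set $M=L'+Z(L)$ and $N=H'+Z(H)$, the ideals appearing in the hypotheses. The first thing I would record is that $M$ is abelian: $L'$ is abelian because $L$ is metabelian, $Z(L)$ is central, and $[L',Z(L)]=0$, so $[M,M]=0$ and hence $M'=0$; likewise $N'=0$. Consequently, as soon as an algebra isomorphism $\alpha\colon U(L)\to U(H)$ satisfying $\alpha(MU(L))=NU(H)$ is in hand, Theorem~\ref{PropIdeal} gives $L/M'\cong H/N'$, and since $M'=N'=0$ this reads $L\cong H$, the desired conclusion. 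Thus the whole problem reduces to showing that the given $\alpha$ carries $MU(L)$ onto $NU(H)$.

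To identify $MU(L)$ intrinsically I would split $M$ into two pieces, each recoverable from the algebra structure. The derived subalgebra is read off from the commutator ideal: the two-sided ideal of $U(L)$ generated by all $uv-vu$ equals $L'U(L)$, because $U(L)/L'U(L)=U(L/L')$ is the maximal commutative quotient. This ideal is preserved by every algebra isomorphism, so $\alpha(L'U(L))=H'U(H)$ with no further work and over any field. The centre $Z(L)$ is the delicate piece, and here I would bring in the Elashvili--Ooms theory. Since $x\notin M$ forces $L$ to be nonabelian, one has $Z(L)=\ker\big((\ad x)|_M\big)$ and $L'=\operatorname{im}\big((\ad x)|_M\big)$, and a direct computation of the stabilizers $L^f=\{a\in L: f([a,L])=0\}$ shows that the Frobenius semiradical $Q(L)$ satisfies $L'+Q(L)=M$ (in fact $Q(L)=M$ once $\dim L'\geq 2$, while $Q(L)=Z(L)$ when $\dim L'=1$). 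The same identities hold for $H$, so $N=H'+Q(H)$.

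The technical heart, and the step I expect to be the main obstacle, is to show that $Q(L)$ is encoded in the centre $Z(U(L))$ in a way that is preserved by $\alpha$, so that $\alpha(Q(L)U(L))=Q(H)U(H)$. In characteristic zero the symmetrization map identifies $Z(U(L))$ with the Poisson centre of the symmetric algebra, and for a regular functional $f$ the differentials of the Casimirs at $f$ span the stabilizer $L^f$; summing over the (Zariski-dense) set of regular $f$ then recovers $Q(L)$ from $Z(U(L))$. The obstacle is that $L$, viewed as the space of primitive elements, is not canonical for an abstract algebra isomorphism, so I would have to phrase the Elashvili--Ooms recovery at the level of the ideal $Q(L)U(L)$ rather than of the subspace $Q(L)$, and verify that the construction is natural with respect to $\alpha$, using only the fact that any isomorphism satisfies $\alpha(Z(U(L)))=Z(U(H))$. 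Granting this, the two pieces combine: because $M=L'+Q(L)$ we have $MU(L)=L'U(L)+Q(L)U(L)$, which $\alpha$ sends to $H'U(H)+Q(H)U(H)=NU(H)$. Feeding this $\alpha$ into Theorem~\ref{PropIdeal} then yields $L\cong H$.
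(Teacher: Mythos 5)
Your overall architecture matches the paper's: reduce to producing an isomorphism $\alpha$ with $\alpha(MU(L))=NU(H)$ for the abelian ideals $M=L'+Z(L)$ and $N=H'+Z(H)$, feed it into Theorem~\ref{PropIdeal} (this is Corollary~\ref{corollary5}(1)), and handle the derived-subalgebra piece via Lemma~\ref{lemma7}. The gap is in what you call the technical heart, and it is not merely an unchecked verification: the recovery statement you propose to use is false. You assert that at a regular $f\in L^*$ the differentials of the Casimirs (elements of $Z(U(L))\cong S(L)^L$) span the stabilizer $L(f)$, so that $F(L)$ (your $Q(L)$), and hence the ideal $F(L)U(L)$, can be reconstructed from $Z(U(L))$. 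But the Elashvili--Ooms theory gives only the one-sided containment $Z(U(L))\subseteq U(F(L))$ (Lemma~\ref{lemma8}(3)), and this can be strict in the most extreme way \emph{inside the very class covered by this theorem}. Take $\F$ of characteristic zero containing some $\lambda\notin\mathbb{Q}$ (e.g.\ $\F=\mathbb{Q}(t)$, $\lambda=t$) and $L=\langle x_1,x_2,x_3\mid [x_3,x_1]=x_1,\ [x_3,x_2]=\lambda x_2\rangle$. This $L$ is metabelian with $M=L'+Z(L)=\langle x_1,x_2\rangle$ of codimension one, and $F(L)=\langle x_1,x_2\rangle\neq 0$; yet $Z(U(L))=\F$: every PBW monomial $x_1^ax_2^bx_3^c$ is an $\ad x_3$-eigenvector of weight $a+\lambda b$, which vanishes only when $a=b=0$, and no nonconstant polynomial in $x_3$ commutes with $x_1$. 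So $Z(U(L))$ carries no information about $F(L)$, no reconstruction of $F(L)U(L)$ from $Z(U(L))$ can exist (your method would ``recover'' $0$ here), and the step $\alpha(Q(L)U(L))=Q(H)U(H)$, which you explicitly grant, is unsupported.

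What closes the gap --- and is how the paper argues --- is to use Ooms' containment only in the direction in which it is true, and only for the piece of $M$ that is canonically placed inside $Z(U(L))$. Decompose $MU(L)=L'U(L)+Z(L)U(L)$, with $Z(L)$ rather than $F(L)$. Normalize $\alpha$ by Lemma~\ref{lem:alphabar} so that $\alpha(\w(L))=\w(H)$. For $z\in Z(L)$ we have $z\in Z(U(L))\cap\w(L)$, hence
$\alpha(z)\in Z(U(H))\cap\w(H)\subseteq U(F(H))\cap\w(H)\subseteq U(N)\cap\w(H)=\w(N)\subseteq NU(H)$,
where $F(H)\subseteq N$ is exactly your stabilizer computation (the paper's Lemma~\ref{lem:lprimezl}) applied to $H$. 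Together with $\alpha(L'U(L))=H'U(H)$ this yields $\alpha(MU(L))\subseteq NU(H)$; the same argument applied to $\alpha^{-1}$ gives equality, and then your first paragraph finishes the proof. Note the asymmetry that saves the day: one never reconstructs anything from $Z(U(L))$; one only needs that the canonical subspace $Z(L)\subseteq Z(U(L))$ is pushed by $\alpha$ into $Z(U(H))$, which Ooms' theorem applied to $H$ then traps inside $U(N)$. (Also remember to dispose of the degenerate abelian case at the outset, since commutativity of the enveloping algebra and $\dim L$ are both preserved.)
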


Given a Lie algebra $L$, an abelian ideal $M$ of $L$ and a certain maximal ideal $\m$ of $U(L)$, we construct in
Section~\ref{sec:ltilde} a Lie algebra $\liealg{M}{\m}$ and an associative algebra $\assoalg{M}{\m}$ using sections of $U(L)$. Choosing
$M$ and $\m$ appropriately, $\liealg{M}{\m}$ and $\assoalg{M}{\m}$ will be invariant under certain automorphisms of $U(L)$ 
(Theorem~\ref{lemma6} and Corollary~\ref{cor:isom}).
The algebras $\liealg{M}{\m}$ and $\assoalg{M}{\m}$ are explicitly computable
for finite-dimensional Lie algebras (Section~\ref{sec:examp}) and $\liealg{M}{\m}$
often turns out to be isomorphic to $L$ (Corollary~\ref{corollary2}).
Hence, for given Lie algebras $L$ and $H$ with abelian ideals
$M$ and $N$ and adequate maximal ideals of $U(L)$ and $U(H)$, respectively, computing $\liealg{M}{\m}$ and $\liealg{N}{\n}$ or $\assoalg{M}{\m}$ and $\assoalg{N}{\n}$ can lead to verifying
that $U(L)\ncong U(H)$. The Lie algebra $\liealg{M}{\m}$ and the associative algebra $\assoalg{M}{\m}$ are explicitly calculated
for several 4-dimensional solvable Lie algebras in Section~\ref{sec:examp}.

Our results allow us to  prove the following theorem which gives a
positive solution for the isomorphism problem for universal enveloping algebras
of 4-dimensional solvable Lie algebras over fields of characteristic zero.

\begin{theorem}\label{theorem1}
  Let $L$ and $H$ be solvable Lie algebras of dimension at most $4$ over a field of characteristic zero. If the universal enveloping algebras
  $U(L)$ and $U(H)$ are isomorphic, then  $L$ and $H$ must also be isomorphic.  
\end{theorem}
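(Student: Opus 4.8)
The plan is to combine the classification of low-dimensional solvable Lie algebras over a field of characteristic zero with the invariants extracted from $U(L)$ in the previous sections, proceeding by a reduction on $\dim L'$. First I would dispose of the cases $\dim L\leq 3$, which are already settled over any field of characteristic different from~$2$ (and hence over characteristic zero) by Malcolmson~\cite{Malcolmson} and by Chun, Kajiwara and Lee~\cite{Chun}, so that only $\dim L=\dim H=4$ remains. Here I use that several coarse invariants are read off from $U(L)$: the dimension $\dim L$ is an invariant of $U(L)$ (for instance its Gelfand--Kirillov dimension), while the two-sided ideal generated by $L'$ coincides with the commutator ideal of $U(L)$, so it is preserved by every algebra isomorphism and the quotient $U(L)/L'U(L)\cong S(L/L')$ recovers $\dim L/L'$. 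Consequently $\dim L'=\dim H'$, and since $L'$ is nilpotent (as $L$ is solvable over a field of characteristic zero) the derived length can only exceed~$2$ when $L'$ is the three-dimensional Heisenberg Lie algebra and $\dim L'=3$.

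The case $\dim L/L'=1$ (equivalently $\dim L'=3$) is the one to which Theorem~\ref{PropIdeal} applies most directly. Taking $M=L'$ and $N=H'$, the canonicity of the commutator ideal gives $\alpha(L'U(L))=H'U(H)$ for free, so Theorem~\ref{PropIdeal} yields $L/L''\cong H/H''$. If $L$ is metabelian then $L''=0$, whence $L\cong H/H''$; comparing dimensions with $\dim L=\dim H$ forces $H''=0$ and therefore $L\cong H$. This settles every metabelian $4$-dimensional algebra with $3$-dimensional derived subalgebra in one stroke. When $L$ is \emph{not} metabelian, $L''=Z(L')$ is one-dimensional and Theorem~\ref{PropIdeal} only recovers the $3$-dimensional quotient $L/L''$; the remaining extension datum must be pinned down by computing the Lie algebra $\liealg{M}{\m}$ for a suitable abelian ideal $M$ and maximal ideal $\m$, using that $\liealg{M}{\m}\cong L$ (Corollary~\ref{corollary2}) together with its invariance under isomorphisms of the enveloping algebra (Theorem~\ref{lemma6} and Corollary~\ref{cor:isom}).

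For the metabelian cases with $\dim L/L'\in\{2,3\}$, Theorem~\ref{PropIdeal} no longer applies, since $L'$ is not of codimension one. Whenever the algebra admits the structure $L=(L'+Z(L))\rtimes\langle x\rangle$, I would invoke Theorem~\ref{th:lprimezl}: the point is that, in characteristic zero, the ideal $(L'+Z(L))U(L)$ is recoverable from $U(L)$ through the relationship between $Z(U(L))$ and the Frobenius semiradical used in that theorem, so the corresponding ideals are matched by $\alpha$ and one concludes $L\cong H$. For the algebras not of this shape, and in particular for the one-parameter families whose distinct members are pairwise non-isomorphic, I would fall back on the explicit computation of $\liealg{M}{\m}$ and, if needed, the associative invariant $\assoalg{M}{\m}$ carried out in Section~\ref{sec:examp}: recovering $\liealg{M}{\m}\cong L$ from $U(L)$ shows that the defining parameter is itself an invariant of $U(L)$, and hence $U(L)\cong U(H)$ forces $L\cong H$.

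The main obstacle is precisely the set of algebras on which the general Theorems~\ref{PropIdeal} and~\ref{th:lprimezl} are silent: the non-metabelian Heisenberg extensions and the parametrized metabelian families, where two algebras can agree on all of $\dim L$, $\dim L/L'$, the derived length and the nilpotency data yet be non-isomorphic. For these I expect the real work to lie in (i) choosing $M$ and $\m$ so that $\liealg{M}{\m}\cong L$ actually holds and the construction is genuinely invariant under every isomorphism $U(L)\to U(H)$, which is what Theorem~\ref{lemma6} and Corollary~\ref{cor:isom} are designed to guarantee, and (ii) checking exhaustively, against the finite classification list, that the computed invariants separate every pair of non-isomorphic algebras so that no case is left ambiguous.
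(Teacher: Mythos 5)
Your overall skeleton --- reduce to dimension $4$, read $\dim L$, $\dim L/L'$, solvability and metabelianity off $U(L)$, settle the codimension-one cases with Theorem~\ref{PropIdeal} and Corollary~\ref{corollary5}, and split the remaining cases between Theorem~\ref{th:lprimezl} and the invariants $\liealg{M}{\m}$, $\assoalg{M}{\m}$ --- is indeed the paper's strategy, but two of your steps fail as stated. First, the non-metabelian case: there $L'$ is the three-dimensional Heisenberg algebra, so \emph{no abelian ideal of $L$ contains $L'$}. Hence Corollary~\ref{corollary2} (which needs $L'\subseteq M$ with $M$ abelian) and Corollary~\ref{cor:isom} (which needs $L'$ abelian) are vacuous, and Theorem~\ref{lemma6} cannot be invoked for a smaller abelian ideal $M$, because nothing guarantees $\alpha(MU(L))=NU(H)$ --- that invariance comes for free only for $M=L'$ (Lemma~\ref{lemma7}). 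So the ``remaining extension datum'' cannot be recovered by the tool you propose. In fact no extra tool is needed: by de Graaf's list, $M^{12}/(M^{12})''\cong L^2$, $M^{13}_a/(M^{13}_a)''\cong L^3_a$ and $M^{14}_a/(M^{14}_a)''\cong L^4_a$, and the isomorphism criteria in Theorem~\ref{th:3dim} match exactly those of the four-dimensional algebras, so the conclusion $L/L''\cong H/H''$ that you correctly extracted from Theorem~\ref{PropIdeal} already determines $L$; this is the paper's Claim~3. (A smaller slip of the same kind: for the pair $M^4$, $M^5$ your fallback appeals to computations that are not actually carried out in Section~\ref{sec:examp}, and Lemma~\ref{lemmaU_M} does not even apply to $M^5$ since $C_L(L')=L$ there; the paper simply uses that nilpotency is an invariant, Lemma~\ref{lemmausefi2}(2).)

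The more serious gap concerns the metabelian algebras with $\dim L'=2$, where your division of labour is inverted and, crucially, incomplete. The one-parameter families $M^6_{0,b}$, $M^7_{0,b}$ with $b\neq 0$ \emph{do} have the shape $(L'+Z(L))\rtimes\langle x_4\rangle$, and Theorem~\ref{th:lprimezl} is indispensable precisely for them, because the computable invariants do not detect $b$: for $M^6_{0,b}$ one gets $\asso{L'}\cong\F[t]/(t^3-t^2-bt)\cong\F\oplus\F[t]/(t^2-t-b)$ and $\lie{L'}\cong R\ltimes R$ with $R=\F[t]/(t^2-t-b)$ acting on itself, so only the square class of $1+4b$ survives; over $\mathbb{Q}$ the non-isomorphic algebras $M^6_{0,1/4}$ and $M^6_{0,7/4}$ share both invariants. (Also, contrary to your claim, $\liealg{M}{\m}\not\cong L$ here: $\dim L/C_L(L')=1$ violates the hypothesis of Corollary~\ref{corollary2}.) Worse, mixed pairs with exactly one centerless member defeat every tool on your list: take $L=M^6_{0,2}$ and $H=M^8$. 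Theorem~\ref{th:lprimezl} does not apply, since $H'+Z(H)=H'$ has codimension two in $H$; and since $\ad\, x_4|_{L'}$ has the distinct eigenvalues $2,-1$, one computes $\lie{L'}\cong M^8\cong\lie{H'}$ and $\asso{L'}\cong\F^3\cong\asso{H'}$. What closes this case in the paper is its Claims~1 and~2: for every centerless algebra $H$ in this class there is $f\in H^*$ with $B_f$ non-degenerate, so $F(H)=0$ and hence $Z(U(H))=\F$ by Lemma~\ref{lemma8}(3), while $Z(L)\subseteq Z(U(L))$ forces $Z(U(L))\neq\F$ when $Z(L)\neq 0$. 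This center/Frobenius-semiradical separation is absent from your proposal; it is also exactly where characteristic zero enters, and without it the exhaustive check you defer to your step (ii) cannot be completed.
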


The proof of Theorem~\ref{theorem1} is presented in Section~\ref{section:proof} and  relies on de Graaf's classification
of 4-dimensional solvable Lie algebras~\cite{deGraaf} (see also
Section~\ref{section:proof}). First, we divide the class of 4-dimensional
solvable Lie algebras into 6 groups in such a way that two Lie algebras
in distinct groups cannot have isomorphic universal enveloping algebras
by the results of the aforementioned article~\cite{Usefi}.

The use of the Frobenius semiradical is the main reason why we could not state
Theorem~\ref{theorem1} in an arbitrary characteristic. Indeed, while Theorem~\ref{PropIdeal} and the constructions of $\liealg{M}{\m}$ and $\assoalg{M}{\m}$ do not require conditions
on the characteristic, the results that rely on the Frobenius semiradical
are only valid in characteristic zero. We conjecture that
Theorem~\ref{theorem1} holds also in positive characteristic,
and in Conjecture~\ref{conj}, we state three statements whose validity would
imply this claim, but the
verification
of these statements is beyond the means of the current paper.

\subsection*{Acknowledgments}
The research presented in this paper formed a part of the first author's
  PhD project under the supervision by the second author. Rodr\'\i guez was
  supported by PhD scholarships awarded by CAPES and CNPq.
  Schneider acknowledges
  the support of the CNPq grants {\em Universal} (project no.: 421624/2018-3)
  and {\em Produtividade em Pesquisa} (project no.: 308773/2016-0)
  and he is also grateful to the Department of Mathematics and Statistics
  of the Memorial University of Newfoundland for its hospitality during his
  visit in November, 2016.
  Usefi is  supported by the grant RGPIN: 2019-05650 from the Natural Sciences and Engineering Research Council of Canada and is grateful to   Departamento de Matem\'atica of
  Universidade Federal de Minas Gerais for its hospitality during his visit in February 2015.

\section{Preliminaries}\label{section:prelim}

  
Let $L$ be a Lie algebra over a field $\F$ and denote by $U(L)$ the
universal enveloping algebra  of $L$.  
Suppose that  $L$ is finite-dimensional and let  $\{x_1,\ldots,x_d\}$ be a basis of $L$. 
By the Poincar\'e--Birkhoff--Witt Theorem (see \cite[Theorem, p.~92]{Humphreys}),
the monomials of the form $x_1^{a_1}\cdots x_d^{a_d}$, with $a_i\geq 0$ for all $i$, form a basis for $U(L)$. 
In particular, the linear subspace generated by the $x_i$ is a Lie subalgebra of $U(L)$ which can be identified with the original Lie algebra $L$. Hence the associative algebra $U(L)$ is also generated by $L$. 
 The augmentation map $\varepsilon_L:U(L)\rightarrow\F$ is the unique algebra homomorphism that extends the map $x_i\mapsto 0$ for all $i$. The kernel of $\varepsilon_L$ is denoted by 
 $\w(L)$ and is referred to as the {\em augmentation ideal} of $U(L)$. The augmentation ideal $\omega(L)$ is the unique maximal ideal of $U(L)$ that contains $x_i$ for all $i$. An easy application of the Poincar\'e--Birkhoff--Witt Theorem shows that $\w(L)=LU(L)=U(L)L$. 
In $U(L)$, the $n$-th power $\omega(L)^n$ is an ideal which is denoted by $\omega^n(L)$. It is clear that the ideals $\omega^1(L)=\omega(L)$, $\omega^2(L)$,
$\omega^3(L),\ldots$ form a descending chain.

If $S$ is a subalgebra of $L$, then a basis 
of $S$ can be extended to a basis  of $L$, and so,  in virtue of the Poincar\'e--Birkhoff--Witt Theorem, $U(S)$ can be considered as a subalgebra of $U(L)$ and $\omega(S)$ can be considered
as a subalgebra of $\omega(L)$.

The following lemma collects some important properties of $U(L)$ and $\omega(L)$ that will be used in the paper. For the proof of parts (1) and (2) see \cite[Proposition 2.2.14]{Dixmier}, for part (3) see \cite[Proposition 6.1(1)]{Usefi}. The $i$-th term of the lower central series of a Lie algebra $L$ is denoted by $\gamma_i(L)$ starting
with $\gamma_1(L)=L$ and $\gamma_2(L)=L'$.

\begin{lemma}\label{lemmadixmer}
Let $L$ be a Lie algebra,   $M$  an ideal of $L$ and $S$ a subalgebra of $L$. Then
\begin{enumerate}
    \item  The right ideal $MU(L)$ of $U(L)$ generated by $M$ coincides with the left ideal $U(L)M$ of $U(L)$ generated by $M$. Hence $MU(L)=U(L)M$ is a two-sided ideal. 
    \item The homomorphism
      $U(L)\to U(L/M)$ induced by the  projection $\pi:L\to L/M$ is surjective with kernel $U(L)M$.
     \item $\omega(S)\cap\omega^n(S)\omega(L)=\omega^{n+1}(S)$; hence, $L\cap \omega^n(S)\omega(L)=\gamma_{n+1}(S)$. In particular $L\cap \w(M)\w(L)=L\cap M\w(L)=M'.$
\end{enumerate}
\end{lemma}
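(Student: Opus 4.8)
The plan is to fix bases adapted to the relevant subspaces and exploit the Poincar\'e--Birkhoff--Witt (PBW) Theorem throughout. For part (1), the essential observation is that $M$ being an ideal means $[a,m]\in M$ for all $a\in L$ and $m\in M$, so that $am=ma+[a,m]$ lets one move an element of $M$ past a generator of $U(L)$ at the cost of correction terms that again lie in $M$. Concretely, for $a\in L$, $m\in M$ and $u\in U(L)$ one gets $a(mu)=m(au)+[a,m]u\in MU(L)$, showing that the right ideal $MU(L)$ is stable under left multiplication by $L$ and hence by all of $U(L)$; thus $MU(L)$ is a two-sided ideal. Since it contains $M$ and is contained in the two-sided ideal generated by $M$, it equals that ideal; the symmetric argument shows $U(L)M$ equals the same two-sided ideal, so $MU(L)=U(L)M$.

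For part (2), the homomorphism $\phi\colon U(L)\to U(L/M)$ is produced by the universal property of $U$ from the Lie homomorphism $\pi\colon L\to L/M$. It is surjective because $U(L/M)$ is generated by $L/M=\phi(L)$. Since $\pi(M)=0$ we have $M\su\ker\phi$, so by part (1) the two-sided ideal $MU(L)=U(L)M$ is contained in $\ker\phi$. For the reverse inclusion I would evaluate $\phi$ on a PBW basis: extending a basis $\{m_1,\dots,m_k\}$ of $M$ to a basis $\{m_1,\dots,m_k,e_1,\dots,e_r\}$ of $L$, the images $\bar e_j$ form a basis of $L/M$, and $\phi$ sends the monomial $m_1^{a_1}\cdots m_k^{a_k}e_1^{b_1}\cdots e_r^{b_r}$ to $\bar e_1^{b_1}\cdots\bar e_r^{b_r}$ when all $a_i=0$ and to $0$ otherwise. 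Hence $\ker\phi$ is spanned by the PBW monomials with some $a_i>0$, each of which lies in $MU(L)$, proving $\ker\phi=MU(L)$.

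Part (3) is where the real work lies, and I expect it to be the main obstacle. Relative to a basis $\{s_1,\dots,s_m\}$ of $S$ extended to a basis $\{s_1,\dots,s_m,t_1,\dots,t_l\}$ of $L$, the PBW Theorem exhibits $U(L)$ as a free left $U(S)$-module on the monomials $\tau^\beta=t_1^{b_1}\cdots t_l^{b_l}$. Setting $W=\bigoplus_{\beta\neq 0}U(S)\tau^\beta$ gives a direct decomposition $U(L)=U(S)\oplus W$ under which $\omega(L)=\omega(S)\oplus W$. Because $\omega^n(S)$ is a two-sided ideal of $U(S)$, a short computation then gives $\omega^n(S)\omega(L)=\omega^{n+1}(S)\oplus\bigl(\bigoplus_{\beta\neq 0}\omega^n(S)\tau^\beta\bigr)$, with the first summand inside $U(S)$ and the second inside $W$. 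Intersecting with $\omega(S)\su U(S)$ and using $\omega^{n+1}(S)\su\omega(S)$ isolates the first summand, yielding $\omega(S)\cap\omega^n(S)\omega(L)=\omega^{n+1}(S)$.

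For the consequence $L\cap\omega^n(S)\omega(L)=\gamma_{n+1}(S)$ (for $n\geq 1$), I would project an element $\ell\in L$ onto the summands $U(S)$ and $W$: a degree count in the PBW filtration shows the $W$-component of any element of $\omega^n(S)\omega(L)$ has degree at least $n+1\geq 2$, whereas $\ell$ has degree one, so its $W$-component must vanish and $\ell\in S\cap\omega^{n+1}(S)$. The remaining identification $S\cap\omega^{n+1}(S)=\gamma_{n+1}(S)$ is the genuinely delicate point---the Lie-algebra analogue of the dimension-subgroup computation---which I would invoke as the classical fact that $\gamma_k(S)=S\cap\omega^k(S)$, itself provable by comparing the $\omega$-adic filtration of $U(S)$ with the associated graded Lie algebra. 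The displayed special case is then the instance $S=M$, $n=1$, combined with the identity $\omega(M)\omega(L)=M\omega(L)$, which holds because $\omega(M)=MU(M)$ and $U(M)\omega(L)=\omega(L)$.
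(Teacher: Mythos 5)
The paper never writes out a proof of this lemma: it simply cites \cite[Proposition 2.2.14]{Dixmier} for parts (1) and (2) and \cite[Proposition 6.1(1)]{Usefi} for part (3). So your proposal is necessarily a ``different route''; in substance it is the standard PBW argument that underlies those references. Your parts (1) and (2) are complete and correct. In part (3), the free-module decomposition $U(L)=U(S)\oplus W$ with $W=\bigoplus_{\beta\neq 0}U(S)\tau^\beta$, the identity $\omega^n(S)\omega(L)=\omega^{n+1}(S)\oplus\bigl(\bigoplus_{\beta\neq 0}\omega^n(S)\tau^\beta\bigr)$, and the intersection argument giving $\omega(S)\cap\omega^n(S)\omega(L)=\omega^{n+1}(S)$ are exactly right; what this buys over the paper is a self-contained and transparent treatment of the ``relative'' part of the statement, in place of an opaque citation.

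Two caveats, neither fatal. First, your degree count is misstated: elements of $\omega^n(S)$ can have PBW-degree $1$ (any nonzero element of $\gamma_n(S)$ is such), so the $W$-component of an element of $\omega^n(S)\omega(L)$ need not have degree at least $n+1$ when $n\geq 2$. What is true, and all your argument needs, is that this component is a combination of basis monomials $s^\alpha\tau^\beta$ with $|\alpha|\geq 1$ and $|\beta|\geq 1$, hence of degree at least $2$, which already forces the degree-one $W$-component of $\ell\in L$ to vanish. Second, the step you yourself flag as delicate, $S\cap\omega^{k}(S)=\gamma_k(S)$, is precisely the $L=S$ instance of the ``hence'' clause of part (3); so your proof reduces the relative statement to the absolute one rather than proving it outright. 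That absolute statement is indeed a true classical fact for Lie algebras over any field (in contrast with the dimension-subgroup problem for group rings), but as written it is an appeal to the literature, not a proof; it should be backed by a concrete citation (it is established in \cite{Usefi}, the very reference the paper uses). With that citation supplied, your argument is correct and is, if anything, more informative than the paper's, which delegates the entire lemma to external sources.
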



The  following  elementary result that we shall frequently use
can be quickly derived from  Lemma~\ref{lemmadixmer}(1).

\begin{lemma}\label{lemma7}
The ideal $L'U(L)$ coincides with the two-sided ideal of $U(L)$ generated by $\{ [a,b]:=ab-ba \mid  a, b \in U(L)\}$. Consequently, $L'U(L)$ is invariant under $\aut {U(L)}$.
\end{lemma}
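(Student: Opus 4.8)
The plan is to prove the two inclusions separately and then deduce the automorphism invariance formally. Write $C$ for the two-sided ideal of $U(L)$ generated by all commutators $[a,b]=ab-ba$ with $a,b\in U(L)$. First I would establish $L'U(L)\subseteq C$. The derived subalgebra $L'$ is spanned, as a vector space, by Lie brackets $[u,v]$ with $u,v\in L$, and under the identification of $L$ with its image in $U(L)$ the bracket $[u,v]$ coincides with $uv-vu$. Thus every spanning element of $L'$ is a commutator of elements of $U(L)$, so $L'\subseteq C$. Since $C$ is a two-sided ideal and, by Lemma~\ref{lemmadixmer}(1), $L'U(L)=U(L)L'$ is the smallest two-sided ideal of $U(L)$ containing $L'$, it follows that $L'U(L)\subseteq C$.

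For the reverse inclusion $C\subseteq L'U(L)$ I would pass to the quotient. By Lemma~\ref{lemmadixmer}(2) applied to the ideal $M=L'$, the projection $L\to L/L'$ induces an isomorphism $U(L)/L'U(L)\cong U(L/L')$. As $L/L'$ is abelian, its universal enveloping algebra $U(L/L')$ is commutative. Hence in the quotient $U(L)/L'U(L)$ every commutator $ab-ba$ is zero, which is exactly the statement that $ab-ba\in L'U(L)$ for all $a,b\in U(L)$. Since $C$ is generated by such commutators, $C\subseteq L'U(L)$, and combining the two inclusions gives $L'U(L)=C$.

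The consequence is then immediate from this intrinsic description. If $\varphi\in\aut{U(L)}$, then $\varphi(ab-ba)=\varphi(a)\varphi(b)-\varphi(b)\varphi(a)$ is again a commutator, and because $\varphi$ is bijective it permutes the full generating set $\{ab-ba\mid a,b\in U(L)\}$ of $C$; therefore $\varphi(C)=C$. As $L'U(L)=C$, the ideal $L'U(L)$ is invariant under $\aut{U(L)}$.

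I do not expect a genuine obstacle here: the whole argument rests on the elementary observation that the commutator ideal $C$ is precisely the kernel of the abelianization map, the one point worth stating carefully being the use of Lemma~\ref{lemmadixmer}(2) to identify $U(L)/L'U(L)$ with $U(L/L')$ and the commutativity of the enveloping algebra of an abelian Lie algebra.
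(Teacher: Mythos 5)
Your proof is correct. Note that the paper itself gives no written proof of this lemma: it only remarks that the statement ``can be quickly derived from Lemma~\ref{lemmadixmer}(1)'', so your argument is a filling-in rather than a rival to an existing proof, and it is the natural one. Both inclusions are sound: $L'\subseteq C$ because Lie brackets of elements of $L$ are commutators in $U(L)$, and $L'U(L)\subseteq C$ then follows since $C$ is a right ideal (you do not even need the ``smallest two-sided ideal'' formulation, though it is correct by Lemma~\ref{lemmadixmer}(1)); the reverse inclusion via $U(L)/L'U(L)\cong U(L/L')$ being commutative is clean, and the invariance under $\aut{U(L)}$ is immediate from the intrinsic description of $C$. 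The one point of divergence from the paper's hint is that your reverse inclusion invokes Lemma~\ref{lemmadixmer}(2) rather than only part (1); if one wanted to stay within part (1), one could instead argue by induction on the length of PBW monomials using the identity $[a,bc]=[a,b]c+b[a,c]$, which reduces every commutator in $U(L)$ to a combination of terms $[x,y]u$ with $x,y\in L$, $u\in U(L)$, hence to elements of $L'U(L)$. Your route buys a shorter, more conceptual argument (commutators die in the abelianization); the part-(1)-only route is more elementary but more computational. Either way the lemma stands as you proved it.
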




It is known that two non-isomorphic Lie algebras may have isomorphic
universal enveloping algebras; examples are presented in  \cite{Usefi,Csaba}. Nevertheless, several properties of a Lie algebra
are determined by the isomorphism type of its    enveloping algebra. 
 For instance, it is a well known that if $L$ is a finite-dimensional Lie algebra, then the (linear) dimension of $L$ coincides with the Gelfand-Kirillov dimension of $U(L)$ (see \cite[8.1.15(iii)]{McConnell}); thus  $\dim L$ is determined only by the isomorphism type of $U(L)$.
 A detailed study of some properties of Lie algebras that are determined by the isomorphism type of its    universal enveloping algebra is given in  \cite{Usefi}; in the following lemma 
 we give a summary of the ones that we use in this paper.

\begin{lemma}\label{lemmausefi2}
Let $L$ and $H$ be  finite-dimensional Lie algebras and $\alpha: U(L)\rightarrow U(H)$ an algebra isomorphism. Then the following hold.
\begin{enumerate}
    \item $\dim L=\dim H$ and $\dim L/L'=\dim H/H'$.
    \item If $L$ is nilpotent, then so is $H$. Moreover, in this case the nilpotency classes of $L$ and $H$ coincide.
    \item $L'/L''\cong H'/H''$; in particular, $\dim L'/L'' = \dim H'/H''$. 
    \item If $L$ is metabelian, then so is $H$.
    \item If $L$ is  solvable, then so is $H$. 
    \item If $M$ and $N$ are ideals of $L$ and $H$, respectively, such that $\alpha(MU(L))=NU(H)$, then $M/M'\cong N/N'$.  
    \end{enumerate}
\end{lemma}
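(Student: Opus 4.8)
My plan is to build everything on one elementary isomorphism together with two preservation facts. For any ideal $M$ of $L$ I claim a natural vector-space isomorphism
\[
  M/M' \;\cong\; MU(L)\big/\bigl(MU(L)\,\omega(L)\bigr).
\]
Indeed, writing $U(L)=\F\cdot 1\oplus\omega(L)$ gives $MU(L)=M+M\omega(L)$, while $MU(L)\,\omega(L)=M\omega(L)$ because $U(L)\omega(L)=\omega(L)$; the map $m\mapsto m+M\omega(L)$ then sends $M$ onto $MU(L)/M\omega(L)$ with kernel $M\cap M\omega(L)=M'$, the last equality being the final assertion of Lemma~\ref{lemmadixmer}(3) (note $L\cap M\omega(L)=M'\subseteq M$). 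Since $M/M'$ is abelian this is simultaneously a Lie-algebra isomorphism. The two preservation facts are: the commutator ideal $L'U(L)$ is carried onto $H'U(H)$ by $\alpha$ (Lemma~\ref{lemma7}); and $\alpha$ may be taken augmentation-preserving. For the latter, set $\chi=(\varepsilon_L\circ\alpha^{-1})|_H$, which is a character of $H$ (it kills $H'$), and replace $\alpha$ by $\sigma_\chi\circ\alpha$, where $\sigma_\chi\colon h\mapsto h+\chi(h)$ is the corresponding automorphism of $U(H)$; then $\varepsilon_H\circ\sigma_\chi\circ\alpha=\varepsilon_L$, so $\alpha(\omega(L))=\omega(H)$. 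This normalization is harmless for the later parts, since $\sigma_\chi$ fixes $H'$ pointwise, and in part (6) it fixes $N$ as well (there $\chi(N)=\varepsilon_L(\alpha^{-1}(N))\subseteq\varepsilon_L(MU(L))=0$), so it preserves the hypothesis $\alpha(MU(L))=NU(H)$.

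Granting this, parts (1), (3) and (6) are immediate. For (1), $\dim L=\dim H$ is the equality of Gelfand--Kirillov dimensions noted before the lemma, and since $\alpha$ maps $L'U(L)$ onto $H'U(H)$ it induces an isomorphism $U(L)/L'U(L)\cong U(H)/H'U(H)$; by Lemma~\ref{lemmadixmer}(2) these quotients are the polynomial algebras $S(L/L')$ and $S(H/H')$, whose numbers of variables $\dim L/L'$ and $\dim H/H'$ therefore agree. Part (3) is the displayed isomorphism with $M=L'$: both $L'U(L)$ and $\omega(L)$ are preserved by the normalized $\alpha$, so $L'/L''\cong H'/H''$. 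Part (6) is the displayed isomorphism for the given $M$ and $N$, using $\alpha(MU(L))=NU(H)$ together with $\alpha(\omega(L))=\omega(H)$.

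Part (4) I would deduce by a dimension count rather than a structural argument: $\dim L'=\dim L-\dim L/L'$ and $\dim L''=\dim L'-\dim L'/L''$ are invariants by (1) and (3), so $L$ metabelian forces $\dim H''=\dim L''=0$ and hence $H$ metabelian. For part (2) I would pass to the augmentation-graded algebra $\operatorname{gr}_{\omega}U(L)=\bigoplus_n\omega^n(L)/\omega^{n+1}(L)$, an invariant of $(U(L),\omega(L))$. Using $\gamma_n(L)=L\cap\omega^n(L)$ (Lemma~\ref{lemmadixmer}(3) with $S=L$) one has, by the Poincar\'e--Birkhoff--Witt Theorem applied to the associated graded Lie algebra $\operatorname{gr}_{\gamma}L$,
\[
  \sum_{n\geq 0}\dim\bigl(\omega^n(L)/\omega^{n+1}(L)\bigr)\,t^n \;=\; \prod_{n\geq 1}\bigl(1-t^n\bigr)^{-\dim(\gamma_n(L)/\gamma_{n+1}(L))}.
\]
The left-hand series is determined by $U(L)$, so the sequence $d_n=\dim(\gamma_n(L)/\gamma_{n+1}(L))$ is an invariant; this yields both that $H$ is nilpotent exactly when $L$ is (only finitely many $d_n$ nonzero) and that the nilpotency classes, each being the largest $n$ with $d_n\neq 0$, coincide.

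The one genuinely hard part is (5). The natural strategy is to make every derived quotient dimension $\dim L^{(i)}/L^{(i+1)}$ an invariant, since then $L$ is solvable precisely when $\sum_i\dim L^{(i)}/L^{(i+1)}=\dim L$, a condition transported by $\alpha$. Through the displayed isomorphism with $M=L^{(i)}$ this comes down to proving that each higher derived ideal $L^{(i)}U(L)$ is $\alpha$-invariant. Lemma~\ref{lemma7} supplies this for $i=1$, but the obvious induction breaks: $L^{(i+1)}U(L)$ is strictly smaller than the associative ideal generated by the commutators of $L^{(i)}U(L)$, the discrepancy being measured by $[L,L^{(i)}]/L^{(i+1)}$, which need not vanish. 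I therefore expect the invariance of these higher ideals to be the principal obstacle. In characteristic zero it can be sidestepped, because there $L$ is solvable if and only if $L'$ is nilpotent, a property one can hope to detect through the graded invariant of part (2) transplanted into the invariant ideal $L'U(L)$; over an arbitrary field a different idea seems to be required.
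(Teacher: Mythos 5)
A preliminary remark on the comparison itself: the paper contains no internal proof of this lemma --- it is explicitly presented as a summary of results imported from \cite{Usefi} --- so your attempt cannot be matched against an argument in the text and must stand entirely on its own. Much of it does. The section isomorphism $M/M'\cong MU(L)/\bigl(MU(L)\,\w(L)\bigr)$ is correct, its kernel computation $M\cap M\w(L)=M'$ being exactly the last assertion of Lemma~\ref{lemmadixmer}(3); your character twist $\sigma_\chi$ is a legitimate construction of the normalization that the paper quotes as Lemma~\ref{lem:alphabar}; and the checks that the twist fixes $H'$, $N$ and hence $NU(H)$ are precisely what parts (3) and (6) need. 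So parts (1), (3), (4) and (6) are complete. Part (2), however, contains a false step: the criterion ``$L$ is nilpotent iff only finitely many $d_n$ are nonzero'' is vacuous, because the lower central series of \emph{any} finite-dimensional Lie algebra stabilizes, so all but finitely many $d_n=\dim\gamma_n(L)/\gamma_{n+1}(L)$ vanish whether or not $L$ is nilpotent. Concretely, the two-dimensional algebra with $[x,y]=y$ has $d_1=1$ and $d_n=0$ for $n\geq 2$, the same sequence as the one-dimensional abelian algebra, yet it is not nilpotent. Your Hilbert-series identity itself is fine in any characteristic (for non-nilpotent $L$ the monomials involving $\gamma_\infty(L)=\bigcap_n\gamma_n(L)$ lie in every $\w^n(L)$ and are invisible to the graded algebra), and the repair uses only data you already have: $L$ is nilpotent if and only if $\sum_n d_n=\dim L$, both sides being invariants by your part (1); with that fix the equality of classes follows as you say.

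The genuine gap is part (5), which you do not prove: you show exactly why the naive induction (trying to make each $L^{(i)}U(L)$ invariant) breaks down, and then stop, offering only an uncarried-out characteristic-zero sketch --- in particular it is never explained which filtration of the non-unital algebra $L'U(L)$ would play the role of the $\w$-adic one, nor why an isomorphism would preserve it. Since the lemma is asserted over arbitrary fields, this is a missing piece of the statement, not a deferral of routine detail. For characteristic zero there is a clean invariant characterization you could have used instead: $L$ is solvable if and only if every finite-dimensional irreducible $U(L)$-module $V$ has commutative image $U(L)/\mathrm{Ann}(V)$. (If $L$ is solvable, Lie's theorem over the algebraic closure shows $L'$ acts by nilpotent operators, hence annihilates any irreducible $V$, so the action factors through the abelian $L/L'$; if $L$ is not solvable, an irreducible constituent of the adjoint module of a nonzero Levi quotient has non-commutative image.) This property is visibly preserved by any algebra isomorphism. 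But it fails in characteristic $p$ --- already the two-dimensional non-abelian algebra has $p$-dimensional irreducible modules with full matrix image --- so the positive-characteristic case of (5), which the lemma also asserts, requires a separate argument that your proposal does not supply and the paper itself only obtains by citing \cite{Usefi}.
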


The next lemma is a variation of  \cite[Lemma  2.1]{Usefi}.

\begin{lemma}\label{lem:alphabar}
Let $L$ and $H$ be Lie algebras and suppose that $\alpha:U(L)\rightarrow U(H)$ is an isomorphism. Then there exists an isomorphism $\overline \alpha:U(L)\rightarrow U(H)$ such that
\begin{enumerate}
    \item $\overline\alpha(\w(L))=\w(H)$; and
    \item if $x\in L$ such that $\alpha(x)\in\w(H)$, then $\overline\alpha(x)=\alpha(x)$.
\end{enumerate}
\end{lemma}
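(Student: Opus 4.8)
The plan is to adjust $\alpha$ by precomposing it with a suitable ``shift'' automorphism of $U(L)$ that cancels the scalar components introduced by $\alpha$ on the generators.

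First I would record the precise obstruction to $\alpha$ carrying $\w(L)$ onto $\w(H)$. The composition $\chi:=\varepsilon_H\circ\alpha\colon U(L)\to\F$ is an algebra homomorphism, so its restriction $f:=\chi|_L\colon L\to\F$ is a linear functional; moreover, because $\F$ is commutative, $\chi$ annihilates every commutator $xy-yx$ with $x,y\in L$, and hence $f$ vanishes on $L'$. Intuitively, $f(x)=\varepsilon_H(\alpha(x))$ records the scalar part of the image of a generator, and the failure of $\alpha$ to respect augmentation ideals is exactly the failure of $f$ to be zero.

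Next I would build the correcting automorphism. Since $f$ vanishes on $L'$, the linear map $L\to U(L)$, $x\mapsto x-f(x)\cdot 1$, is a Lie algebra homomorphism into $U(L)$ equipped with the commutator bracket: scalars are central, so $[x-f(x),y-f(y)]=[x,y]$, while $f([x,y])=0$ gives $[x,y]-f([x,y])=[x,y]$. By the universal property of $U(L)$ it extends to an algebra endomorphism $\Theta$ of $U(L)$; the endomorphism induced in the same way by $x\mapsto x+f(x)$ is a two-sided inverse, so $\Theta\in\aut{U(L)}$. I would then set $\overline\alpha:=\alpha\circ\Theta$, which is an isomorphism $U(L)\to U(H)$, and verify the two stated properties. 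For (1), both $\varepsilon_H\circ\overline\alpha$ and $\varepsilon_L$ are algebra homomorphisms $U(L)\to\F$ that agree on every generator $x\in L$, since $\varepsilon_H(\overline\alpha(x))=\chi(x-f(x))=f(x)-f(x)=0=\varepsilon_L(x)$; hence they coincide, and because $\overline\alpha$ is bijective it carries $\ker\varepsilon_L=\w(L)$ onto $\ker\varepsilon_H=\w(H)$. For (2), if $x\in L$ satisfies $\alpha(x)\in\w(H)$ then $f(x)=\chi(x)=\varepsilon_H(\alpha(x))=0$, whence $\overline\alpha(x)=\alpha(x)-f(x)\cdot 1=\alpha(x)$.

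I expect no serious obstacle here. The only point requiring care is the well-definedness of $\Theta$, which rests on the elementary but essential observation that the $\F$-valued homomorphism $\chi$ vanishes on $L'$; once that is in place, both claimed properties follow by direct computation, and condition (2) in particular comes out automatically from the same construction rather than needing a separate argument.
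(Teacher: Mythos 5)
Your proof is correct and is essentially the same argument the paper relies on: the paper itself gives no proof of this lemma, deferring to \cite[Lemma 2.1]{Usefi}, whose proof is exactly this augmentation-correcting shift, since your $\overline\alpha=\alpha\circ\Theta$ sends $x\mapsto \alpha(x)-\varepsilon_H(\alpha(x))\cdot 1$. The one point needing care, the well-definedness of $\Theta$ via the vanishing of $\varepsilon_H\circ\alpha$ on $L'$, is handled correctly.
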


At the end of this preliminary section, let us review some facts concerning the Frobenius semiradical of a Lie algebra in characteristic  zero. This concept will be used in the proof of Theorem  \ref{theorem1}. A more complete treatment  can be found in \cite{Ooms2,Ooms1}.
We denote the dual of a vector space $V$ by $V^*$. Let $L$ be a Lie algebra over a field $\F$ of characteristic  zero and let $f \in L^*$. Then we define
$$L(f)=\{x\in L\mid f([x,y])=0 \text{ for all }y\in L\}.$$
It is easy to see that $L(f)$ is a Lie subalgebra of $L$ containing the center $Z(L)$ of $L$. 
Set $$
i(L)=\min_{f\in L^*}\dim L(f);
$$ the number $i(L)$ is referred to as the \textit{index} of $L$.  An element $f\in L^*$ is called \textit{regular} if $\dim L(f)=i(L)$, the set of regular elements is denoted by $L^*_{\reg}$. The set 

$$F(L)=\sum_{f\in L^*_{\reg}}L(f)$$
is called the {\em Frobenius semiradical} of $L$.
The Frobenius semiradical is a subalgebra of $L$, containing $Z(L)$, which is invariant
under $\aut L$.

The following result links the Frobenius semiradical of a Lie algebra defined over a field of characteristic  zero to the center $Z(U(L))$ of its    enveloping algebra. If $X$ is a subset or an element of a Lie algebra $L$, then $C_L(X)$ denotes the centralizer of $X$ in $L$.

\begin{lemma}\label{lemma8}
Let $L$ be a Lie algebra over a field of characteristic zero. Then the following statements hold. 
\begin{enumerate}
    \item  Let $M$ be a Lie subalgebra of $L$ of codimension one. Then $i(M)=i(L)+1$ if and only if $F(L)\subseteq M$.
    \item  If $x\in L$ is such that $C_L(x)$ has codimension one in $L$, then $i(C_L(x))=i(L)+1$.
    \item  $Z(U(L))\subseteq U(F(L))$.
\end{enumerate}
\end{lemma}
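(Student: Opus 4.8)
The plan is to translate everything into the language of the alternating forms $B_f(a,b)=f([a,b])$ on $L$, for which $L(f)=\{a\in L: B_f(a,L)=0\}$ is exactly the radical, so that $\dim L(f)=\dim L-\operatorname{rank}B_f$ and $i(L)=\dim L-\max_f\operatorname{rank}B_f$; the regular functionals are those of maximal rank. The starting point for (1) is the elementary fact that, for a hyperplane $M\subseteq L$, the restriction $B_f|_M=B_{f|_M}$ satisfies $\dim M(f|_M)=\dim L(f)+1$ if $L(f)\subseteq M$ and $\dim M(f|_M)=\dim L(f)-1$ otherwise (pass to the nondegenerate form induced on $L/L(f)$ and use that a hyperplane of a symplectic space is coisotropic, so its radical is one-dimensional). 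Taking the minimum over $f$, every regular $f$ yields a value in $\{i(L)-1,i(L)+1\}$, while every non-regular $f$ yields at least $i(L)+1$ by a parity count (since $\dim L(f)\equiv\dim L\pmod 2$ forces $\dim L(f)\geq i(L)+2$); hence $i(M)=i(L)-1$ exactly when some regular $f$ has $L(f)\not\subseteq M$, and $i(M)=i(L)+1$ otherwise. As $F(L)\subseteq M$ means precisely that $L(f)\subseteq M$ for every regular $f$, this proves (1).

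For (2) I would deduce it from (1) by proving $F(L)\subseteq C_L(x)$. Since $C_L(x)$ has codimension one, the image $[x,L]$ is one-dimensional, say $[x,L]=\F w$. If $f$ is regular with $f(w)\neq0$, then for any $y\in L(f)$ we have $0=f([y,x])$ and $[y,x]\in\F w$, which forces $[x,y]=0$; thus $L(f)\subseteq C_L(x)$ on the open set where $f(w)\neq0$. The regular locus $L^*_{\reg}$ is a nonempty, hence dense, Zariski-open subset of the irreducible space $L^*$, on which $f\mapsto L(f)$ is a morphism into a Grassmannian (the rank of $B_f$ being constant there); consequently $\{f\in L^*_{\reg}: L(f)\subseteq C_L(x)\}$ is closed in $L^*_{\reg}$ and contains the open dense subset where $f(w)\neq0$, so it is all of $L^*_{\reg}$. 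Therefore $L(f)\subseteq C_L(x)$ for every regular $f$, giving $F(L)\subseteq C_L(x)$, and (1) yields $i(C_L(x))=i(L)+1$.

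For (3) I would use the PBW filtration, under which $\operatorname{gr}U(L)\cong S(L)$ as Poisson algebras. If $z\in Z(U(L))$, then its symbol $\sigma(z)\in S(L)$ lies in the Poisson centre $Y(L)=S(L)^{L}$, because $[z,\ell]=0$ for $\ell\in L$. The key step is to show $Y(L)\subseteq S(F(L))$: identifying $S(L)$ with the polynomial functions on $L^*$, an invariant $p$ has gradient $d_fp\in L(f)$ at every $f$, and hence $d_fp\in F(L)$ for every regular $f$; writing $L=F(L)\oplus V$ and using that the regular locus is dense, the partial derivatives of $p$ in the $V$-directions vanish identically, so $p$ depends only on coordinates from $F(L)$, that is, $p\in S(F(L))$. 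This is the step that genuinely requires characteristic zero, since in characteristic $p$ a polynomial can have vanishing partials without being independent of a variable, which is exactly why the conclusion is restricted to characteristic zero here.

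The main obstacle is the final passage from the graded statement back to $U(L)$: knowing $\sigma(z)\in S(F(L))=\operatorname{gr}U(F(L))$ does not by itself place $z$ in $U(F(L))$, because a preimage in $U(F(L))$ of the symbol need not be central, so one cannot simply subtract and induct within $Z(U(L))$. To handle this I would argue by induction on the PBW degree using the decomposition $U(L)=\bigoplus_{b}U(F(L))\,v^{b}$, which is free as a left $U(F(L))$-module once a PBW basis is chosen with the generators of $F(L)$ ordered first; writing $z=\sum_b w_b v^{b}$, I would extract from the centrality relations $[z,v_j]=0$ and $[z,u_i]=0$ that the components $w_b$ with $b\neq0$ must vanish. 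This leading-term analysis is the technical heart of the statement and is where the more careful treatment of Elashvili and Ooms is needed.
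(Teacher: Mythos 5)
Your parts (1) and (2) are essentially complete and correct: the hyperplane dichotomy $\dim M(f|_M)=\dim L(f)\pm 1$ according as $L(f)\subseteq M$ or not, the parity argument that rules out non-regular functionals, and the Zariski-density argument in (2) are all sound (density of $L^*_{\reg}$ in the irreducible space $L^*$ is legitimate because a field of characteristic zero is infinite, and $f\mapsto L(f)$ is indeed a morphism on the constant-rank locus). For the record, the paper itself proves none of this lemma: it cites Elashvili--Ooms for (1)--(2) and Ooms for (3), so on the first two parts you have supplied arguments where the paper has only references.

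Part (3), however, contains a genuine gap, and it is exactly the one you flag. What you actually prove is that the top symbol $\sigma(z)$ of any $z\in Z(U(L))$ lies in $S(L)^{L}$, and that $S(L)^{L}\subseteq S(F(L))$ (your characteristic-zero partial-derivative argument is fine). This constrains only the leading term of $z$. Your proposed repair --- write $z=\sum_b w_b v^{b}$ in the free left $U(F(L))$-module decomposition and ``extract from the centrality relations'' that $w_b=0$ for all $b\neq 0$ --- runs into the same wall you already identified: the symbol computation is blind to everything below the top total degree, the components $w_bv^{b}$ with $b\neq 0$ sit precisely there, and once the top $U(F(L))$-part is subtracted the element is no longer central, so the relation $[z,u]=0$ cannot be re-applied and there is nothing to iterate. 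Deferring this step to ``the more careful treatment of Elashvili and Ooms'' leaves the statement unproved. The standard way to close the gap (and the one underlying Ooms' proof) is the symmetrization map $\beta:S(L)\to U(L)$: in characteristic zero it is a linear bijection equivariant for the adjoint actions, so $\beta\bigl(S(L)^{L}\bigr)=U(L)^{\ad L}=Z(U(L))$, the last equality holding because $L$ generates $U(L)$; moreover, for any subalgebra $M$ of $L$ the restriction of $\beta$ to $S(M)$ is the symmetrization map of $M$, whence $\beta\bigl(S(F(L))\bigr)=U(F(L))$. Combined with your inclusion $S(L)^{L}\subseteq S(F(L))$, this yields $Z(U(L))=\beta\bigl(S(L)^{L}\bigr)\subseteq\beta\bigl(S(F(L))\bigr)=U(F(L))$ in one line, with no leading-term induction at all.
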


\begin{proof}
See  \cite[Proposition 1.6(4)]{Ooms2} for part (1), \cite[Proposition 1.9(1)]{Ooms2} for part (2) and \cite[Theorem 2.5(1)]{Ooms1} for part (3). 
\end{proof}

Part  (3) of Lemma~\ref{lemma8} gives a necessary condition for an element to be central in $U(L)$ and this will be used in the proof of Theorem  \ref{theorem1}. The containment stated in Lemma~\ref{lemma8}(3) is not valid in characteristic  $p$, since, in this case, there exists a non-zero polynomial expression $p(x)$, for each $x\in L$, such that  $p(x)\in Z(U(L))$; see \cite[Chapter VI, Lemma 5]{Jacobsonbook}. 

The following consequence of Lemma~\ref{lemma8} will be used in the
proof of Theorem~\ref{th:lprimezl}.

  \begin{lemma}\label{lem:lprimezl}
    Suppose that $L$ is a metabelian  and non-abelian Lie algebra
    over a field of characteristic zero
    such that $L'+Z(L)$ has codimension one in $L$. Then $F(L)\subseteq
    L'+Z(L)$.
  \end{lemma}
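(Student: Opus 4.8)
The plan is to apply Lemma~\ref{lemma8}(1) to the codimension-one subspace $M = L' + Z(L)$: since that lemma tells us $F(L) \subseteq M$ is equivalent to $i(M) = i(L) + 1$, it suffices to establish this numerical identity. First I would record that $M$ is genuinely a subalgebra — indeed an ideal — and, crucially, that it is abelian. Both $L'$ and $Z(L)$ are ideals, so $M$ is an ideal; and since $L$ is metabelian we have $[L',L']=0$, while $Z(L)$ is central, so $[M,M]=0$. Because an abelian Lie algebra $A$ satisfies $A(g)=A$ for every $g\in A^*$, this gives $i(M)=\dim M=\dim L-1$. Thus the whole lemma reduces to the single claim $i(L)=\dim L-2$.

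To compute $i(L)$ I would work with the alternating (Kirillov) form $B_f(y,z)=f([y,z])$ attached to each $f\in L^*$, whose radical is exactly $L(f)$, so that $\dim L(f)=\dim L-\mathrm{rank}\,B_f$. Fix $x\in L\setminus M$ and write $L=M\oplus\F x$. The decisive structural observation is that $B_f$ vanishes identically on $M\times M$ (again because $M$ is abelian); since $M$ has codimension one, any alternating form that is zero on a codimension-one subspace has rank at most $2$. Hence $\dim L(f)\geq\dim L-2$ for every $f$, which yields the lower bound $i(L)\geq\dim L-2$.

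For the matching upper bound I must exhibit a single $f$ with $\mathrm{rank}\,B_f=2$, equivalently with $f([x,m])\neq 0$ for some $m\in M$; such an $f$ exists precisely when $[x,M]=[x,L']\neq 0$. This is the step where the non-abelian hypothesis is essential, and I expect it to be the main (if small) obstacle: if $[x,L']=0$, then since $[M,L']\subseteq[M,M]=0$ we would obtain $[L,L']=0$, i.e. $L'\subseteq Z(L)$, whence $M=L'+Z(L)=Z(L)$ is a codimension-one center — but a Lie algebra with a codimension-one center is abelian, contradicting our hypothesis. Therefore $[x,L']\neq 0$, a suitable $f$ exists, and $i(L)=\dim L-2$. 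Combined with $i(M)=\dim L-1$ this gives $i(M)=i(L)+1$, and Lemma~\ref{lemma8}(1) then delivers $F(L)\subseteq L'+Z(L)$. (I would avoid routing the argument through Lemma~\ref{lemma8}(2), since that would require $C_L(x)$ to have codimension one, i.e. $\dim[x,L']=1$, which is not forced by the hypotheses.)
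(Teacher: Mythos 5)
Your proof is correct, but it takes a genuinely different route from the paper's. The paper's argument is a short reduction to Lemma~\ref{lemma8}(2): writing $M=L'+Z(L)$, it observes that $M=C_L(L')$ (using non-abelianness), picks $x$ with $L=M\rtimes\langle x\rangle$ and $y\in L'$ with $[y,x]\neq 0$, and notes that then $C_L(y)=M$ is the centralizer of a single element and has codimension one; Lemma~\ref{lemma8}(2) gives $i(M)=i(L)+1$ and Lemma~\ref{lemma8}(1) concludes. You instead compute both indices outright: $i(M)=\dim M=\dim L-1$ because $M$ is abelian, and $i(L)=\dim L-2$ by a rank analysis of the Kirillov form $B_f$ (rank at most $2$ for every $f$ since $B_f$ vanishes on the codimension-one abelian subspace $M$; rank exactly $2$ for a suitable $f$ because $[x,L']\neq 0$, which you correctly deduce from the non-abelian hypothesis via the fact that a codimension-one center forces abelianness). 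Both arguments are sound; yours is more elementary, uses only Lemma~\ref{lemma8}(1) from the Elashvili--Ooms machinery, and yields the explicit values of $i(L)$ and $i(M)$ as a byproduct, whereas the paper's is shorter. One remark on your closing parenthesis: your stated reason for avoiding Lemma~\ref{lemma8}(2) --- that $C_L(x)$ need not have codimension one --- is aimed at the wrong element. The lemma is not to be applied to the complement generator $x$ but to a well-chosen $y\in L'$ with $[y,x]\neq 0$: since $[M,L']\subseteq[M,M]=0$, one has $M\subseteq C_L(y)\subsetneq L$, which forces $C_L(y)=M$ to have codimension one exactly as required. So the route you dismissed does work, and it is the one the paper takes.
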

  \begin{proof}
    Set $M=L'+Z(L)$. Clearly, $M\leq C_L(L')$ and, in fact, as $L$ is non-abelian,
    $M= C_L(L')$.
Since $M$ has codimension one in $L$,
there is $x\in L$ such that $L=M\rtimes\left<x\right>$. Moreover,
there exists $y\in L'$ such that $[y,x]\neq 0$, and it follows that
$M=C_L(y)$.
Now, Lemma~\ref{lemma8} implies that $F(L)\subseteq M$.
\end{proof}

\section{Universal enveloping algebras of metabelian Lie algebras}
\label{sec:metabelian}

In this section we prove Theorem~\ref{PropIdeal}
which shows that the isomorphism problem for
universal enveloping algebras has a positive solution for a rather large class of metabelian Lie algebras. 
This theorem  was inspired by a corresponding result for group algebras.
In the proof we use a well-known fact that if $\varphi$ is an automorphism of a polynomial ring $\F[x]$, then $\varphi(x)=ax+b$ with $a,b\in\F$ and $a\neq 0$; see  \cite[Proposition 3.1]{Nagata}.

\begin{proof}[The proof of Theorem~\ref{PropIdeal}]
Invoking  Lemma~\ref{lem:alphabar}, we assume  without loss of generality that
 $\alpha(MU(L))=NU(H)$ and  $\alpha(\w(L))=\w(H)$. We claim that 
\begin{equation}\label{claimyz}
  \mbox{for every $z\in L$ there exists $y_z\in H$ such that
    $\alpha(z)\equiv y_z\pmod{N\w(H)}$}.
\end{equation}
Let us first verify claim  \eqref{claimyz} in the case when
$z=x$. The map $\alpha$ induces an isomorphism between $U(L)/MU(L)$ and $U(H)/NU(H)$. On other hand, by Lemma~\ref{lemmadixmer}(2),
$$ U(L)/MU(L)\cong U(L/M) \quad \mbox{ and } \quad U(H)/NU(H)\cong U(H/N),$$
and so we can view $U(L)/MU(L)$ and $U(H)/NU(H)$ as polynomial rings in the
variables $x+MU(L)$ and $y+NU(H)$, respectively. Thus, using the
remark preceding the proposition, we have $\alpha(x)+NU(H)=ay+b+NU(H)$ where $a,b\in\F$ and $a\neq 0$. Further, as we assume $\alpha(\w(L))=\w(H)$,
it follows that $b=0$. Hence, $\alpha(x)+NU(H)=ay+NU(H)$, and it follows that there exist $z_1,\ldots, z_n\in N$ and $u_1,\ldots,u_n\in U(H)$ such that 
\begin{eqnarray*}
\alpha(x)= ay +\sum_{i=1}^n z_iu_i.
\end{eqnarray*}
Denoting the augmentation map of $U(H)$ by $\varepsilon_H$, we have
\begin{eqnarray*}
\alpha(x)&=& ay +\sum_{i=1}^n z_i u_i= ay +\sum_{i=1}^n z_i(u_i-\varepsilon_H(u_i)+\varepsilon_H(u_i))\\&=& ay+\sum_{i=1}^n \varepsilon_H(u_i)z_i+\sum_{i=1}^n z_i(u_i-\varepsilon_H(u_i)).
\end{eqnarray*} 
Since $u_i-\varepsilon_H(u_i)\in\omega(H)$, it follows that 
\begin{equation}\label{eq4}
  \alpha(x)\equiv ay+\sum_{i=1}^n \varepsilon_H(u_i)z_i\pmod{N\w(H)}.
\end{equation}
Note that the element on the right-hand side of equation  \eqref{eq4}
lies in $H$.
Denoting this element by $y_x$, we obtain that  
\begin{equation}\label{eq41}
  \alpha(x)\equiv y_x\pmod{N\w(H)}.  
\end{equation}
Hence \eqref{claimyz} is valid for the element $x\in L$.
If $z\in M$, then $\alpha(z)\in NU(H)=N+N\w(H)$,
and so there is some $y_z\in N$ such that
$\alpha(z)\equiv y_z\pmod{N\w(H)}$. Therefore property  \eqref{claimyz} holds
also for elements of $M$. As $L=M+\left<x\right>$, claim  \eqref{claimyz}
must hold for all elements $z\in L$.

Now, using Lemma~\ref{lemmadixmer}(3) we have that 
$$\alpha (M\w(L))=\alpha(MU(L)\w(L))=\alpha(MU(L))\alpha(\w(L))=NU(H)\w(H)=N\w(H).$$
Therefore the assignment $z+M\w(L)\mapsto y_z+N\w(H)$
for all $z\in L$ defines
an injective Lie algebra homomorphism
$$
L/(L\cap M\w(L))\cong (L+M\w(L))/M\w(L) \rightarrow (H+N\w(H))/N\w(H)\cong
H/(H\cap N\w(H)).
$$
Applying Lemma~\ref{lemmadixmer}(3), we obtain that
$L/(L\cap M\w(L))\cong L/M'$ and, similarly, that $H/(H\cap N\w(H))\cong H/N'$.
Thus there exists an injective Lie algebra homomorphism
$L/M'\rightarrow H/N'$. On the other hand,
Lemma~\ref{lemmausefi2}(6) shows that $\dim M/M'=\dim N/N'$,
and since $\dim L/M=\dim H/N=1$
we can conclude that $\dim L/M'=\dim H/N'$. Now it follows that
the injective homomorphism $L/M'\rightarrow H/N'$ is an isomorphism. 
\end{proof}

\begin{corollary}\label{corollary5}
Suppose that $L$ and $H$ are as Theorem~\ref{PropIdeal} and that $\alpha: U(L)\rightarrow U(H)$ is an algebra isomorphism. Then the following are valid.
\begin{enumerate}
    \item If either  $M$ or $N$ is abelian, then $L\cong H$.
    \item If $M=L'$ and $N=H'$, then $L/L''\cong H/H''$. If, in addition,
      either $L$ or $H$ is
metabelian, then $L\cong H$. 
\end{enumerate}
\end{corollary}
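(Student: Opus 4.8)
The plan is to derive Corollary~\ref{corollary5} directly from Theorem~\ref{PropIdeal} by analyzing what additional information is available under each of the two hypotheses. In every case we already have from Theorem~\ref{PropIdeal} that $L/M'\cong H/N'$, so the work consists in promoting this to a statement about $L$ and $H$ themselves.

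For part~(1), suppose $M$ is abelian. Then $M'=0$, so Theorem~\ref{PropIdeal} gives $L=L/M'\cong H/N'$. In particular $H/N'$ has the same dimension as $L$, which equals $\dim H$ by Lemma~\ref{lemmausefi2}(1); hence $\dim N'=0$, so $N'=0$ as well. Therefore $N$ is abelian too, and $H=H/N'\cong L/M'=L$. The symmetric argument handles the case in which $N$ is assumed abelian. The only point to check carefully is that $\dim L=\dim H$, which is exactly Lemma~\ref{lemmausefi2}(1); everything else is immediate.

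For part~(2), we specialize to $M=L'$ and $N=H'$. Here $M'=L''$ and $N'=H''$, so Theorem~\ref{PropIdeal} yields $L/L''\cong H/H''$ at once, giving the first assertion. For the second, assume (say) that $L$ is metabelian, so $L''=0$ and thus $L\cong H/H''$. By Lemma~\ref{lemmausefi2}(4), since $U(L)\cong U(H)$ and $L$ is metabelian, $H$ is metabelian as well, whence $H''=0$ and $H/H''=H$. Combining, $L\cong H$. The case where $H$ is assumed metabelian is symmetric, using again Lemma~\ref{lemmausefi2}(4) to transfer metabelianness back to $L$.

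I expect no serious obstacle here, since the corollary is essentially a bookkeeping consequence of Theorem~\ref{PropIdeal} together with the invariants recorded in Lemma~\ref{lemmausefi2}. The one spot deserving attention is that in part~(2) the applicability of Theorem~\ref{PropIdeal} presupposes the hypothesis $\alpha(MU(L))=NU(H)$, i.e.\ $\alpha(L'U(L))=H'U(H)$; this is guaranteed by Lemma~\ref{lemma7}, which asserts that $L'U(L)$ is the characteristic two-sided ideal generated by all commutators and is therefore preserved by any isomorphism of enveloping algebras. Making this invariance explicit is what lets us invoke Theorem~\ref{PropIdeal} with the choice $M=L'$, $N=H'$ without an extra assumption.
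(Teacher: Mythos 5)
Your proof is correct and follows essentially the same route as the paper's: part~(1) combines Theorem~\ref{PropIdeal} with the dimension count from Lemma~\ref{lemmausefi2}(1) to force $N'=0$, and part~(2) uses Lemma~\ref{lemma7} to verify the hypothesis $\alpha(L'U(L))=H'U(H)$ and Lemma~\ref{lemmausefi2}(4) to transfer metabelianness. Your explicit remark that Lemma~\ref{lemma7} is what licenses the application of Theorem~\ref{PropIdeal} with $M=L'$, $N=H'$ is exactly the point the paper makes at the start of its proof of part~(2).
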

\begin{proof}
  (1) By Lemma~\ref{lemmausefi2}(1),  $\dim L=\dim H$.
  Suppose  that $M$ is abelian. By Theorem~\ref{PropIdeal},  we have  $L\cong H/N'$, and hence
  $N'=0$. Thus $L\cong H$, as claimed.

  (2) Lemma~\ref{lemma7} implies that $\alpha (L'U(L))=H'U(H)$.
  Thus Corollary~\ref{corollary5}(1), with $M=L'$ and $N=H'$, gives us $L/L''\cong H/H''$. If, in addition, $L$, say, is metabelian, then so is $H$,
  by Lemma~\ref{lemmausefi2}(4). In particular, $L''=0$ and
  $H''=0$, which implies $L\cong H$.
\end{proof}

It is worth noting that in~\cite[Theorem 3.1(ii)]{Csaba} we can find examples of non-isomorphic
metabelian Lie algebras with isomorphic universal enveloping algebras, in which the derived subalgebras have codimension different from one.  These algebras
are nilpotent and are defined over fields of positive characteristic.

We end this section with the proof  of Theorem~\ref{th:lprimezl}.
Recall that the Frobenius semiradical $F(L)$ of a Lie algebra $L$ was
introduced in  Section~\ref{section:prelim}.

\begin{proof}[The proof of Theorem~\ref{th:lprimezl}]
First, we note that the theorem is true if either $L$ or $H$ is abelian. Therefore, we assume that $L$ and $H$ are non-abelian. Set $M=L'+Z(L)$ and $N=H'+Z(H)$. By Lemma~\ref{lem:lprimezl}, $F(L)\subseteq M$ and $F(H)\subseteq N$. Applying Lemma~\ref{lemma8}(3), we have the following chains of inclusions:
 \begin{equation}\label{eq:inclusions}
   Z(U(L))\subseteq U(F(L))\subseteq U(M)\quad
   \mbox{and}\quad Z(U(H))\subseteq U(F(H))\subseteq U(N).
   \end{equation}
 Let $\alpha:U(L)\rightarrow U(H)$ be an isomorphism and
 suppose using Lemma~\ref{lem:alphabar} that $\alpha(\w(L))=\w(H)$. We claim
 that $\alpha(MU(L))=NU(H)$. 
By Lemma~\ref{lemma7}, $$\alpha(L'U(L))=H'U(H)\subseteq NU(H),$$  and since $Z(L)\subseteq Z(U(L))$, it follows that 
 $$
 \alpha(Z(L))\subseteq Z(U(H)).
 $$
 Further,
 $Z(L)\subseteq \w(L)$, and hence we obtain from equation~\eqref{eq:inclusions} that
  $$
 \alpha(Z(L))\subseteq Z(U(H))\cap\w(H)\subseteq U(N)\cap\w(H)=\w(N)\subseteq NU(H).
 $$
 Thus $\alpha(Z(L)U(L))\subseteq NU(H)$,
 and $$
 \alpha(MU(L))=\alpha(L'U(L))+\alpha(Z(L)U(L))\subseteq NU(H).
 $$
 Applying the same argument to the isomorphism $\alpha^{-1}:U(H)\rightarrow U(L)$, we have that $\alpha^{-1}(NU(H))\subseteq MU(L)$. Therefore $\alpha(MU(L))=NU(H)$, and since $M$ and $N$ are abelian ideals of $L$ and $H$, respectively,
 Corollary~\ref{corollary5}(1) implies that $L\cong H$.  
\end{proof}

\section{Finite-dimensional automorphism invariants of  universal enveloping algebras}
\label{sec:ltilde}

The aim of this section is to present constructions that, for a Lie algebra $L$,
for an abelian ideal $M$ of $L$, and for a certain maximal ideal $\m$ of $U(L)$, outputs another Lie algebra $\liealg{M}{\m}$ and
an associative algebra $\assoalg{M}{\m}$
that are constructed using sections of $U(L)$; the algebras $\liealg{M}{\m}$ and $\assoalg{M}{\m}$  are invariant under certain automorphisms of $U(L)$. We will show that $\liealg{M}{\m}$ can be isomorphic to $L$, and in such cases the isomorphism type of $L$ can be determined by the isomorphism type of $U(L)$. The constructions are based on arguments presented by~\cite{Chun}.

Throughout this section, $L$ denotes a finite-dimensional Lie algebra over a field $\F$ and $M$ an abelian ideal of $L$.
Let $K$ denote the quotient $L/M$.
Lemma~\ref{lemmadixmer} implies that $MU(L)$ is a two-sided ideal of $U(L)$ and $U(K)\cong U(L)/MU(L)$.
Since $M$ is abelian, $K$ has
a well-defined action $\ad_{K,M}:K\rightarrow\gl(M)$ given by
$\ad_{K,M}(k+M)(a)=[k,a]$ for all $k+M\in K$ and $a\in M$.
Then the image $\ad_{K,M}(K)$ of $K$ under $\ad_{K,M}$ is a subalgebra of $\gl(M)$.

Let $\m$ be a  maximal two-sided ideal of $U(L)$ such that $U(L)/\m\cong\F$ and $MU(L)\subseteq\m$. We can take, for instance, $\m=\w(L)$.
Since $MU(L)\m=M\m$, $M\m$ is a two-sided ideal of $U(L)$.
Set $$\widetilde I=MU(L)/M\m,$$ and 
consider $\widetilde I$
as an abelian Lie algebra.  Since $M$ is abelian, by
\cite[Proposition 2.4]{Chun}, the map
\begin{equation}\label{eq1}
    \vartheta:x\mapsto x+M\m
\end{equation}
is an isomorphism of the
(commutative) Lie algebras $M$ and $ \widetilde I$.  
Define 
a homomorphism $\beta:U(L)/MU(L)\rightarrow \gl(\widetilde I)$ by setting
\begin{equation}\label{action}
\beta(w+MU(L))(a+M\m)=[w,a]+M\m=(wa-aw)+M\m
\end{equation}
for all $w\in U(L)$ and $a\in MU(L)$.

\begin{proposition}\label{prop:Ltilde}
Let $L$, $M$, $\m$,  $\widetilde I$ and $\beta$ be as above. Then the following are valid.
\begin{enumerate}
    \item The map $\beta:U(L)/MU(L)\rightarrow\gl(\widetilde I)$ is a well-defined homomorphism of Lie algebras.
    \item  Let $J$ be the unique Lie ideal of $U(L)$ that contains $MU(L)$ and such that $\ker\beta=J/MU(L)$. Considering $\widetilde I$ and $\widetilde K=U(L)/J$ as Lie algebras and setting $$
    \liealg{M}{\m}=\widetilde I\rtimes_\beta \widetilde K,$$ we have that $\liealg{M}{\m}$ is a finite-dimensional Lie algebra. Moreover, if $K$ is abelian, then $\liealg{M}{\m}$ is metabelian.
  \item Let $\J=J\cap\m$. Then $\J$ is a two-sided associative ideal of $U(L)$
    and $\J$ has codimension one in $J$. In particular, $\dim U(L)/\J=\dim U(L)/J+1$.
    \item If $\m=\w(L)$, then $L\cap J=C_L(M)$ and $C_L(M)U(L)\subseteq \J$.
    \item If $\m=\w(L)$, then there is an injective homomorphism  $\widetilde\chi$ from $M\rtimes L/C_L(M)$ to  $\liealg{M}{\m}$ which is an isomorphism if and only if $\dim L/C_L(M)=\dim \widetilde K$.
\end{enumerate}
\end{proposition}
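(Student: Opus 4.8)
The plan is to build $\widetilde\chi$ separately on the two factors of the semidirect product $M\rtimes L/C_L(M)$ and then verify that it assembles into a well-defined injective Lie homomorphism. On the abelian factor $M$ I would use the isomorphism $\vartheta$ of~\eqref{eq1}, sending $a\mapsto a+M\m\in\widetilde I$. On the factor $L/C_L(M)$ I would use the composite $L\hookrightarrow U(L)\to U(L)/J=\widetilde K$; since $L\cap J=C_L(M)$ by part~(4), this kills exactly $C_L(M)$ and therefore descends to a well-defined injective Lie homomorphism $L/C_L(M)\hookrightarrow\widetilde K$, $x+C_L(M)\mapsto x+J$. Combining the two, I would set
\[
\widetilde\chi\bigl(a,\,x+C_L(M)\bigr)=\bigl(\vartheta(a),\,x+J\bigr)=\bigl(a+M\m,\,x+J\bigr)\in\widetilde I\rtimes_\beta\widetilde K=\liealg{M}{\m}.
\]

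To see that $\widetilde\chi$ respects brackets I would check the three bracket types in turn. Brackets internal to $M$ are trivial on both sides, since $M$ and $\widetilde I$ are abelian; brackets internal to $L/C_L(M)$ are preserved because $L\to\widetilde K$ is a Lie homomorphism, the bracket in $\widetilde K$ being the commutator inherited from $U(L)$. The essential point is the compatibility of the two actions. Directly from the definition~\eqref{action} of $\beta$, for $x\in L$ and $a\in M$ one has
\[
\beta\bigl(x+MU(L)\bigr)\bigl(\vartheta(a)\bigr)=[x,a]+M\m=\vartheta\bigl([x,a]\bigr),
\]
and since $\beta$ factors through $\widetilde K$ this says that $\vartheta$ intertwines the adjoint action of $L$ on $M$ with the induced action of $\widetilde K$ on $\widetilde I$, pulled back along $L\to\widetilde K$. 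This single identity is exactly what forces the mixed brackets on the two sides to agree, so that $\widetilde\chi$ is a homomorphism of semidirect products.

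For injectivity, suppose $\widetilde\chi(a,x+C_L(M))=0$. Then $a+M\m=0$ in $\widetilde I$, so $a=0$ because $\vartheta$ is injective, and $x+J=0$, so $x\in L\cap J=C_L(M)$ by part~(4), giving $x+C_L(M)=0$. Finally, because $\vartheta$ is an isomorphism we have $\dim\widetilde I=\dim M$, hence $\dim\liealg{M}{\m}=\dim M+\dim\widetilde K$, whereas $\dim\bigl(M\rtimes L/C_L(M)\bigr)=\dim M+\dim L/C_L(M)$. As $\widetilde\chi$ is an injective linear map between finite-dimensional spaces, it is an isomorphism exactly when these dimensions coincide, that is, precisely when $\dim L/C_L(M)=\dim\widetilde K$, which is the asserted criterion.

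The only delicate point I anticipate is the bookkeeping with the semidirect-product conventions, in particular keeping the order and sign in the mixed bracket consistent so that the intertwining identity above is applied in the correct direction; beyond that, the argument uses nothing new past parts~(1)--(4) of the proposition.
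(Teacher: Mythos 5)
Your proposal addresses only part~(5) of the proposition. The statement comprises five assertions, and your argument explicitly leans on the other four: you invoke part~(4) (the identity $L\cap J=C_L(M)$) both for the well-definedness and for the injectivity of the map on the second factor, and you implicitly use part~(2) (finite-dimensionality of $\liealg{M}{\m}$) in your closing dimension count. None of (1)--(4) is established anywhere in the proposal, and they are not formalities: (1) requires checking that $[w+v,a+u]\equiv [w,a]\pmod{M\m}$ for $v\in MU(L)$ and $u\in M\m$, using $MU(L)\subseteq\m$; (2) requires observing that $\beta$ induces an embedding $\widetilde K\hookrightarrow\gl(\widetilde I)$ with $\widetilde I$ finite-dimensional (this is also what legitimizes treating $\widetilde K$ as acting on $\widetilde I$ at all); (3) requires the computation $[wu,a]=[w,a]u+[w,[u,a]]+[u,a]w$ to see that $\J=J\cap\m$ is a right ideal, together with $J+\m=U(L)$ (from $1\in J$) for the codimension-one claim; and (4) requires Lemma~\ref{lemmadixmer}(3), i.e.\ $L\cap M\w(L)=M'=0$ since $M$ is abelian, in both directions of the equality $L\cap J=C_L(M)$. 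So, judged as a proof of the proposition as stated, there is a genuine gap: four of its five assertions are assumed rather than proved.

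For the part you do treat, your argument is correct and is essentially the paper's own. The paper first defines $\chi(a,k+M)=(\vartheta(a),k+J)$ on $M\rtimes K$ with $K=L/M$, verifies the intertwining property (in its notation, $\widetilde{\psi(k)}=\tilde\psi(k)$), identifies the kernel of $k+M\mapsto k+J$ with $C_L(M)$, and only then passes to the induced injective map $\widetilde\chi$ on $M\rtimes L/C_L(M)$; you define $\widetilde\chi$ on $M\rtimes L/C_L(M)$ directly, which is a mild streamlining, and your identity $\beta\bigl(x+MU(L)\bigr)\bigl(\vartheta(a)\bigr)=\vartheta\bigl([x,a]\bigr)$ combined with the injective-linear-map dimension count is exactly the paper's closing step. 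To turn the proposal into a complete proof you only need to supply proofs of (1)--(4) along the lines sketched above.
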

\begin{proof}
(1)  We claim that $\beta$ above is well-defined. Suppose that $w \in U(L)$,  $a, v\in MU(L)$, $u\in M\m$ and recall that $MU(L)\subseteq \m$. Then
  $$
  [w+v,a+u]=[w,a]+[w,u]+[v,a]+[v,u].
  $$
  Now  $[w,u]=wu-uw\in M\m$, $[v,a]=va-av\in M\m$, and $[v,u]=vu-uv\in M\m$. Thus $[w+v,a+u]+M\m=[w,a]+M\m$. Since $\beta$ is
  induced by the adjoint action of $U(L)$ on the ideal $MU(L)$, we have that $\beta:U(L)/MU(L)\rightarrow\gl(\widetilde I)$ is a Lie algebra homomorphism.
  
  (2) Since $\beta$ is a Lie algebra homomorphism, it follows that $\liealg{M}{\m}$ is a Lie algebra. Note that $\beta$ induces an injective homomorphism
  $\widetilde K\rightarrow\gl(\widetilde I)$, and so $\widetilde K$ is finite-dimensional.
  Hence $\liealg{M}{\m}$ is finite-dimensional. Now, if $K=L/M$ is abelian then so is $U(K)\cong U(L)/MU(L)$,
  and consequently $\widetilde K$ is also abelian. In this case
  $\liealg{M}{\m}$ is the semidirect sum of two abelian Lie algebras, and so
  $\liealg{M}{\m}$ is metabelian. 
  
  (3) Since $\J$ is a Lie ideal of $U(L)$, it is enough show that $\J$ is a right ideal. Suppose that $w\in \J$ and let $u\in U(L)$. Then we have, for $a\in MU(L)$, that 
$$[wu,a]=[w,a]u+w[u,a]=[w,a]u+[w,[u,a]]+[u,a]w.$$
  Since $w\in J$, the first two summands lie in $M\m$, and since $w\in \m$ the last summand is in $M\m$. Thus $wu\in \J=J\cap \m$, and so $\J$ is a
  two-sided ideal of $\w(L)$. Since $1\in J$, we have that $J+\m=U(L)$, and so
  \[
  J/\J=J/(J\cap \m)=(J+\m)/\m=U(L)/\m\cong \F.
  \]
  Therefore $\J$ has codimension one in $J$.
  
(4) Suppose that $\m=\w(L)$ and let $x\in L\cap J$. Then, by the
definition of $J$, we have, for all $y\in M$, that
$$
[x,y]\in L\cap M\w(L)=M'=0$$
(see Lemma~\ref{lemmadixmer}(3) for the second equality).
Thus $x\in C_L(M)$, and so  $L\cap J \subseteq C_L(M)$. For the converse,
let $x\in C_L(M)$ and let $a=\sum_{i=1}^n x_iu_i$, where $x_i\in M$ and
$u_i\in U(L)$, be an arbitrary element of $MU(L)$. Since $[x,x_i]=0$
for all $i$, it follows that 
$$[x,a]=x\sum_{i=1}^n x_iu_i-\sum_{i=1}^n x_iu_ix=\sum_{i=1}^n x_ixu_i-
\sum_{i=1}^n x_iu_ix.$$
Since every summand of the right-hand side of the last equation lies in $M\w(L)$, we have that $x\in L\cap J.$ Thus $C_L(M)\subseteq L\cap J$ and
the equality $C_L(M)\subseteq L\cap J$ follows.

For the second statement, note that $C_L(M)$ is an ideal of $L$, and so $C_L(M)U(L)$ is a two-sided ideal of $U(L)$, by Lemma~\ref{lemmadixmer}. As $\J$ is a two-sided ideal of $U(L)$ which contains $C_L(M)$, it follows directly that $C_L(M)U(L)\subseteq \J$.

  (5) Note that $M$ is a $K$-module with the adjoint action $\ad_{K,M}$. Hence $M$ is also a module for the associative algebra $U(K)$. Now $U(K)\cong U(L)/MU(L)$, and so  $U(K)$ acts on the vector space $\widetilde I$ by the composition of the isomorphism $U(K)\rightarrow U(L)/MU(L)$ and the homomorphism defined in  \eqref{action}. Hence the isomorphic vector spaces $M$ and $\widetilde I$ are
both $U(K)$-modules considering $U(K)$ as a Lie algebra. Let
$\psi:U(K)\rightarrow\gl(M)$ and $\tilde\psi:U(K)\rightarrow\gl(\widetilde I)$
denote the corresponding homomorphisms. Recall that $\vartheta: M\rightarrow \widetilde I$ as defined in equation (\ref{eq1}) is a linear isomorphism.
If $\alpha\in\gl(M)$ then $\tilde \alpha=\vartheta\alpha\vartheta^{-1}$ is
a corresponding endomorphism of $\widetilde I$ and the map
$\alpha\mapsto\tilde\alpha$ is an
isomorphism $\gl(M)\rightarrow\gl(\widetilde I)$.

We claim that $\widetilde{\psi(k)}=\tilde\psi(k)$ for all $k\in K$.
In other words, under the bijection $\vartheta:M\rightarrow \widetilde I$, an element $k\in K$ induces the same transformation on 
$M$ and on $\widetilde I$.
Suppose that $k\in K$ and $a+M\m\in \widetilde I$. As noted above,
  the elements $x_i+M\m$ form a basis for $\widetilde I$, where the set
  $\{x_i\}_i$ is a basis for $M$. Hence we may assume without
  loss of generality that $a\in M$. Then
  \begin{align*}
  \tilde\psi(k)(a+M\m)&=
  [k,a]+M\m=\psi(k)(a)+M\m=
  \vartheta \psi(k)\vartheta^{-1}(a+M\m)\\&=\widetilde{\psi(k)}(a+M\m).
  \end{align*}
  Thus the claim holds.
  
  Define the map $\chi:M\rtimes K\rightarrow \lie M=\widetilde I\rtimes_{\beta} \widetilde K$,
  by $\chi(a,k+M)=(\vartheta(a),k+J)$.
 Clearly, $\vartheta:M\rightarrow\widetilde I$ is a linear isomorphism
 and the map $k+M\mapsto k+J$ is a well-defined
 linear 
 homomorphism $K\rightarrow U(L)/J$.
 It follows from the argument in the previous paragraph that $\chi$ is a homomorphism of Lie algebras. It is immediate verify that the kernel of the map  $k+M\mapsto k+J$ is equal to $C_L(M)$. Further, $\chi$ induces an injective homomorphism $\widetilde{\chi}:M\rtimes L/C_L(M)\to \widetilde{I}\rtimes_{\beta} \widetilde{K}$ which is an isomorphism if and only if $\dim L/C_L(M)=\dim \widetilde{K}$. 
\end{proof}

In Proposition~\ref{prop:Ltilde}(2) we defined the Lie algebra
\[
\liealg{M}{\m}=\widetilde I\rtimes_{\beta}\widetilde K
\]
and
we can also define the associative algebra
\begin{equation}\label{eq:UM}
\assoalg{M}{\m}=U(L)/\J
\end{equation}
where $\J$ is the two-sided ideal of $U(L)$ defined in Proposition~\ref{prop:Ltilde}(3).
If $\m=\w(L)$ then we simply write $\lie{M}$ and $\asso{M}$ instead of $\liealg{M}{\m}$ and $\assoalg{M}{\m}$, respectively. If $L$ is
a finite-dimensional Lie algebra, then both $\liealg{M}{\m}$ and $\assoalg{M}{\m}$ are
finite-dimensional and they will be explicitly computed for some
4-dimensional solvable Lie algebras in Section~\ref{sec:examp}.
%
Now the main result of this section states that the Lie algebra $\liealg{M}{\m}$
and the associative algebra $\assoalg{M}{\m}$
are preserved
by certain automorphisms of $U(L)$.

\begin{theorem}
\label{lemma6}
Let $L$ and $H$ be finite-dimensional Lie algebras over a field $\F$ and
let $M$ and $N$ be
abelian ideals of $L$ and $H$, respectively. Suppose that
$\alpha:U(L)\rightarrow U(H)$ is an algebra isomorphism such
that $\alpha(MU(L))=NU(H)$.
Let $\m$ be a maximal ideal of $U(L)$ such that
$U(L)/\m\cong \F$ and $M\subseteq \m$ and set $\n=\alpha(\m)$.
Then $\alpha$ induces an isomorphism between the Lie algebras
$\liealg{M}{\m}$ and $\liealg{N}{\n}$ defined in Proposition~\ref{prop:Ltilde}(2) and between
the associative algebras $\assoalg{M}{\m}$ and $\assoalg{N}{\n}$
defined in~\eqref{eq:UM}.
\end{theorem}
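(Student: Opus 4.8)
The plan is to show that $\alpha$ carries each ingredient used to build $\liealg{M}{\m}$ and $\assoalg{M}{\m}$ to the corresponding ingredient on the $H$-side. Write $\widetilde I'$, $\beta'$, $J'$, $\widetilde K'$ and $\J'$ for the objects produced by Proposition~\ref{prop:Ltilde} from the data $(H,N,\n)$. First I would check that this data is admissible: since $\alpha$ is an algebra isomorphism, $U(H)/\n=U(H)/\alpha(\m)\cong U(L)/\m\cong\F$, and $NU(H)=\alpha(MU(L))\subseteq\alpha(\m)=\n$, so $\n$ is a maximal ideal of $U(H)$ with $U(H)/\n\cong\F$ and $N\subseteq\n$. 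Hence $\liealg{N}{\n}$ and $\assoalg{N}{\n}$ are indeed defined.

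The first key point is to identify the images of $MU(L)$ and $M\m$. We are given $\alpha(MU(L))=NU(H)$, and for the second I would recall from the construction that $M\m=MU(L)\cdot\m$ (because $U(L)\m=\m$). Since $\alpha$ carries products of subspaces to products of their images,
\[
\alpha(M\m)=\alpha(MU(L))\,\alpha(\m)=NU(H)\cdot\n=N\n.
\]
Consequently $\alpha$ induces a linear isomorphism $\widetilde I=MU(L)/M\m\to NU(H)/N\n=\widetilde I'$ and an algebra isomorphism $U(L)/MU(L)\to U(H)/NU(H)$.

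The heart of the argument is to verify that $\alpha$ intertwines the adjoint actions $\beta$ and $\beta'$ and therefore carries $J$ to $J'$. For $w\in U(L)$ and $a\in MU(L)$ one has $\alpha([w,a])=[\alpha(w),\alpha(a)]$ because $\alpha$ respects commutators; reading this modulo $N\n$ and comparing with~\eqref{action} shows that $\alpha(\beta(w+MU(L))(a+M\m))$ equals $\beta'(\alpha(w)+NU(H))(\alpha(a)+N\n)$ under the identification $\widetilde I\cong\widetilde I'$. In particular the induced isomorphism $U(L)/MU(L)\to U(H)/NU(H)$ maps $\ker\beta$ onto $\ker\beta'$, and since these kernels are $J/MU(L)$ and $J'/NU(H)$ while $\alpha(MU(L))=NU(H)$, I can conclude $\alpha(J)=J'$. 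Thus $\alpha$ also induces an isomorphism $\widetilde K=U(L)/J\to U(H)/J'=\widetilde K'$.

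It remains to assemble these pieces. The compatibility with the actions, together with the fact that $\widetilde I$ and $\widetilde I'$ are abelian, guarantees that the pair of induced isomorphisms $\widetilde I\to\widetilde I'$ and $\widetilde K\to\widetilde K'$ respects the bracket of the semidirect sums, so it defines a Lie algebra isomorphism $\liealg{M}{\m}=\widetilde I\rtimes_\beta\widetilde K\to\widetilde I'\rtimes_{\beta'}\widetilde K'=\liealg{N}{\n}$. For the associative algebras, since $\alpha(J)=J'$, $\alpha(\m)=\n$, and $\alpha$ is a bijection, we get $\alpha(\J)=\alpha(J\cap\m)=J'\cap\n=\J'$, so $\alpha$ descends to an algebra isomorphism $\assoalg{M}{\m}=U(L)/\J\to U(H)/\J'=\assoalg{N}{\n}$. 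The only genuinely delicate step is the intertwining identity that forces $\alpha(J)=J'$; once that is in place everything else is routine bookkeeping with quotients, resting on nothing more than $\alpha$ being an algebra isomorphism that respects commutators.
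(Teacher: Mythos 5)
Your proposal is correct and follows essentially the same route as the paper's proof: establish $\alpha(M\m)=N\n$ from $\alpha(MU(L))=NU(H)$, use the fact that $\alpha$ respects commutators to show the kernels $J$ and $J'$ correspond (the paper does this by the same element-wise computation your intertwining identity encodes), and then descend $\alpha$ to the quotients to get both the semidirect-sum Lie algebra isomorphism and, via $\alpha(J\cap\m)=J'\cap\n$, the associative algebra isomorphism. Your preliminary check that $(H,N,\n)$ is admissible data is a small addition the paper leaves implicit, but it does not change the argument.
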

 
\begin{proof}
  Denote by $\widetilde I_1=MU(L)/M\m$ and $\widetilde I_2=NU(H)/N\mathfrak{n}$. Since $\alpha(MU(L))=NU(H)$ we have that $$\alpha(M\m)=\alpha(MU(L)\m)=\alpha(MU(L))\alpha(\m)=NU(H)\mathfrak{n}=N\mathfrak{n}.$$
  Thus $\alpha$ induces an isomorphism
   $\overline\alpha$ between $\widetilde I_1$ and $\widetilde I_2$.
  
  Let $J_1/MU(L)$ and $J_2/NU(H)$ be the kernels of the actions of  $U(L)/MU(L)$ and $U(H)/NU(H)$ on $\widetilde I_1$ and $\widetilde I_2$, respectively,
  defined by equation \eqref{action}. 
  We will show that $\alpha(J_1)=J_2$.
  By definition
  $$
  J_1=\{w\in U(L)\mid [w,a]\in M\m \mbox{ for all }a\in MU(L)\}.$$
  Suppose that $w\in J_1$.
  Then $[w,a]\in M\m$ for all $a\in MU(L)$. As
  $\alpha(M\m)=N\mathfrak{n}$, we have
  $[\alpha(w),\alpha(a)]=\alpha([w,a])\in N\mathfrak{n}$ for
  all $a\in MU(L)$.
  As $a$ runs through all elements of $MU(L)$,  $\alpha(a)$ runs through all elements of $NU(H)$, and so
  we have that $[\alpha(w),a]\in N\mathfrak{n}$ for all $a\in NU(H)$.
  Therefore $\alpha(w)\in J_2$, and so $\alpha(J_1)\subseteq J_2$.
  The inclusion $\alpha(J_2)\subseteq J_1$
  can be proved similarly. Therefore 
    \begin{equation}\label{eqJ}
  \alpha(J_1)=J_2.
  \end{equation}
    Setting $\J_1=J_1\cap\m$ and $\J_2=J_2\cap\n$, this implies that
    \[
    \alpha(\J_1)=\alpha(J_1\cap \m)=J_2\cap\n=\J_2,
    \]
    and so $\alpha$ induces an isomorphism between $\assoalg{M}{\m}=U(L)/\J_1$ and
    $\assoalg{N}{\n}=U(L)/\J_2$.
In addition, equation~\eqref{eqJ} also implies that
    the map
      $\widetilde\alpha:U(L)/J_1\rightarrow U(H)/J_2$ defined by 
  $\widetilde\alpha(w+J_1)=\alpha(w)+J_2$, $w\in U(L)$,
  is an isomorphism. 
  
  Let us complete the proof of the claim concerning the isomorphism between
  $\liealg{M}{\m}$ and $\liealg{N}{\n}$. Define the map $(\overline{\alpha},\widetilde \alpha):\liealg{M}{\m}\rightarrow \liealg{N}{\n}$, by
  \[(\overline{\alpha},\widetilde \alpha)(a+M\m,w+J_1)=(\overline{\alpha}(a+M\m),\widetilde\alpha(w+J_2)).\]
  We will show that $(\overline{\alpha},\widetilde \alpha)$ is an isomorphism.
  Since $\overline\alpha$ and $\widetilde\alpha$ are isomorphisms, we are only
  required to show that
  $$
  \widetilde\alpha([a+M\m,w+J_1])=[\alpha(a)+N\mathfrak{n},\alpha(w)+J_2]
  $$
  for all $a\in MU(L)$ and $w\in U(L)$.
  Let us compute
  \begin{eqnarray*}
    {[}\alpha(a)+N\mathfrak{n},\alpha(w)+J_2{]}&=&[\alpha(a),\alpha(w)]+N\mathfrak{n}\\
    &=&\alpha({[}a,w])+N\mathfrak{n}
  =\widetilde\alpha([a+M\m,w+J_1]){}.
  \end{eqnarray*}
  Hence $(\overline{\alpha},\widetilde \alpha)$ is an isomorphism, as claimed.
\end{proof}

Using Lemma~\ref{lemma7}, Lemma~\ref{lem:alphabar} and
Theorem~\ref{lemma6}, and taking $M=L'$, $N=H'$, $\m=\w(L)$, and
$\n=\w(H)$, we obtain the following corollary.

\begin{corollary}\label{cor:isom}
Let $L$ and $H$ be finite-dimensional Lie algebras over a field $\F$ such that
$L'$ and $H'$ are abelian. If $U(L)\cong U(H)$, then
$\lie{L'}\cong \lie{H'}$ and $\asso{L'}\cong \asso{H'}$.
\end{corollary}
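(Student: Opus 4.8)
The plan is to deduce Corollary~\ref{cor:isom} directly from Theorem~\ref{lemma6} by choosing the ideals and the maximal ideals in the canonical way dictated by the derived subalgebras and augmentation ideals. Specifically, I would set $M=L'$, $N=H'$, $\m=\w(L)$, and $\n=\w(H)$, and then verify that all the hypotheses of Theorem~\ref{lemma6} are met so that its conclusion can be invoked verbatim.

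The first step is to produce, from the given abstract isomorphism $U(L)\cong U(H)$, a concrete isomorphism $\alpha$ that behaves well with respect to these choices. By Lemma~\ref{lem:alphabar}, I may replace the given isomorphism by one (still called $\alpha$) satisfying $\alpha(\w(L))=\w(H)$, so taking $\n=\alpha(\m)=\alpha(\w(L))=\w(H)$ is consistent with the intended choice. Next I must check that $\alpha(MU(L))=NU(H)$, that is, $\alpha(L'U(L))=H'U(H)$. This is exactly the content of Lemma~\ref{lemma7}, which asserts that $L'U(L)$ is the two-sided ideal generated by all commutators $ab-ba$ and is therefore invariant under any algebra isomorphism; applying $\alpha$ sends this ideal to the corresponding ideal $H'U(H)$ of $U(H)$.

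It remains to confirm the remaining technical hypotheses. The ideals $M=L'$ and $N=H'$ are abelian precisely because $L'$ and $H'$ are assumed abelian in the statement, so they qualify as the abelian ideals required by Theorem~\ref{lemma6}. Likewise, $\m=\w(L)$ is a maximal ideal with $U(L)/\w(L)\cong\F$ and $M=L'\subseteq\w(L)$, since every element of $L$ (in particular every element of $L'$) lies in $\w(L)=LU(L)$; the analogous facts hold for $\n=\w(H)$. With $\m=\w(L)$ the notation $\liealg{L'}{\w(L)}$ and $\assoalg{L'}{\w(L)}$ collapses to the abbreviated $\lie{L'}$ and $\asso{L'}$, and similarly for $H$.

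With every hypothesis verified, the conclusion of Theorem~\ref{lemma6} gives at once that $\alpha$ induces an isomorphism between the Lie algebras $\liealg{M}{\m}$ and $\liealg{N}{\n}$ and between the associative algebras $\assoalg{M}{\m}$ and $\assoalg{N}{\n}$, which under the present choices reads $\lie{L'}\cong\lie{H'}$ and $\asso{L'}\cong\asso{H'}$. There is essentially no obstacle here, since this corollary is a straightforward specialization; the only point requiring care is the bookkeeping around Lemma~\ref{lem:alphabar}, ensuring that passing to a modified isomorphism $\overline\alpha$ preserving the augmentation ideals does not disturb the invariance $\overline\alpha(L'U(L))=H'U(H)$, which it does not, since the latter follows purely formally from Lemma~\ref{lemma7} for every algebra isomorphism.
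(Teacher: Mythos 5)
Your proposal is correct and follows exactly the paper's own route: the paper derives this corollary by specializing Theorem~\ref{lemma6} with $M=L'$, $N=H'$, $\m=\w(L)$, $\n=\w(H)$, using Lemma~\ref{lem:alphabar} to normalize the isomorphism so that $\alpha(\w(L))=\w(H)$ and Lemma~\ref{lemma7} to guarantee $\alpha(L'U(L))=H'U(H)$. Your write-up simply makes explicit the hypothesis-checking that the paper leaves implicit, including the correct observation that the normalization from Lemma~\ref{lem:alphabar} does not disturb the invariance of $L'U(L)$.
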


The following results will be useful in the determination
of $\lie M$ and $\asso M$ for some 4-dimensional solvable Lie algebras
in Section~\ref{sec:examp}.

\begin{corollary}\label{corollary2}
 Let $L$ be a Lie algebra and let $M$ be an abelian ideal of $L$ containing $L'$. Suppose that $L=M\rtimes K$, where $\dim L/C_L(M)\geq \lfloor (\dim M)^2/4\rfloor+1$. Then, the following hold.
 \begin{enumerate}
 	\item $\lie{M}\cong M\rtimes L/C_L(M)$.
 	\item If $C_L(M)=M$, then $U(L)=J\oplus K$ (as vector spaces).
\end{enumerate} 	
\end{corollary}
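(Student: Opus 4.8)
The plan is to deduce part (1) from Proposition~\ref{prop:Ltilde}(5). Since the homomorphism $\widetilde\chi\colon M\rtimes L/C_L(M)\to\lie M$ is always injective and $\dim\widetilde I=\dim M$, comparing dimensions of $M\rtimes L/C_L(M)$ and $\lie M=\widetilde I\rtimes_\beta\widetilde K$ already gives $\dim L/C_L(M)\le\dim\widetilde K$ for free, and $\widetilde\chi$ is an isomorphism precisely when equality holds. So everything reduces to the reverse bound on $\dim\widetilde K$, and this is where the hypothesis on $\dim L/C_L(M)$ enters. First I would record that, because $L'\subseteq M$, the quotient $L/M$ (hence the complement $K\cong L/M$) is abelian; I write $m=\dim M$ and note $\dim L/C_L(M)=\dim\ad_{K,M}(K)$, the dimension of the image of the adjoint action in $\gl(M)$.

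The key step is to identify $\widetilde K=U(L)/J\cong\beta(U(K))$ explicitly as a subalgebra of $\End{M}$, where $U(K)\cong U(L)/MU(L)$. Using $\vartheta\colon M\to\widetilde I$ to identify $\gl(\widetilde I)$ with $\gl(M)$, a short induction modulo $M\w(L)$ shows that for $k_1,\dots,k_n\in K$ and $a\in M$ one has $[k_1\cdots k_n,a]\equiv[k_1,[\cdots[k_n,a]\cdots]]\pmod{M\w(L)}$, the cross terms all landing in $M\w(L)$; hence $\beta(k_1\cdots k_n)=\ad_{k_1}\cdots\ad_{k_n}$ as a composition in $\End{M}$. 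Since $\beta(1)=0$, it follows that $\widetilde K=\beta(\w(K))$ is exactly the (non-unital) associative subalgebra $\mathcal B$ of $\End{M}$ generated by $\ad_{K,M}(K)$, and $\mathcal B$ is commutative because $K$ abelian forces the operators $\ad_{k}$ to commute. Now I would invoke the classical Schur--Jacobson bound, valid over any field: a commutative subalgebra of the full matrix algebra $\End{M}\cong M_m(\F)$ has dimension at most $\lfloor m^2/4\rfloor+1$ (adjoin the identity to $\mathcal B$ if necessary). Thus $\dim\widetilde K\le\lfloor m^2/4\rfloor+1$, and together with the hypothesis $\dim L/C_L(M)\ge\lfloor m^2/4\rfloor+1$ and the free inequality $\dim L/C_L(M)\le\dim\widetilde K$ we obtain $\lfloor m^2/4\rfloor+1\le\dim L/C_L(M)\le\dim\widetilde K\le\lfloor m^2/4\rfloor+1$, so all are equal. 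By Proposition~\ref{prop:Ltilde}(5), $\widetilde\chi$ is an isomorphism and $\lie M\cong M\rtimes L/C_L(M)$.

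For part (2), assume $C_L(M)=M$, so that $\dim L/C_L(M)=\dim L/M=\dim K$; by part (1) this equals $\dim\widetilde K=\dim U(L)/J$, whence $J$ has finite codimension $\dim K$ in $U(L)$. By Proposition~\ref{prop:Ltilde}(4) (with $\m=\w(L)$) we have $L\cap J=C_L(M)=M$, so for the complement $K$ in $L=M\rtimes K$ we get $K\cap J\subseteq K\cap(L\cap J)=K\cap M=0$. Therefore the composite $K\hookrightarrow U(L)\to U(L)/J$ is injective, and as $\dim K=\dim U(L)/J<\infty$ it is a linear isomorphism onto $U(L)/J$. Consequently $U(L)=J\oplus K$ as vector spaces, as claimed.

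The main obstacle is the middle paragraph: one must recognise $\widetilde K$ not merely as an abelian Lie subalgebra of $\gl(M)$ (which is immediate from Proposition~\ref{prop:Ltilde}(2)) but as a genuine commutative \emph{associative} subalgebra of $\End{M}$, so that the Schur dimension bound can be applied. This demands the explicit evaluation of $\beta$ on products of generators and the observation that $\beta$ is multiplicative on $\w(K)$, together with care that the Schur--Jacobson bound is invoked in its field-independent form.
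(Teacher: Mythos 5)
Your proof is correct, and its skeleton coincides with the paper's: both reduce part (1), via Proposition~\ref{prop:Ltilde}(5), to the equality $\dim L/C_L(M)=\dim\widetilde K$, obtain the inequality $\dim L/C_L(M)\leq\dim\widetilde K$ cheaply (the paper via $L/C_L(M)=L/(J\cap L)\cong (L+J)/J\leq U(L)/J$ using Proposition~\ref{prop:Ltilde}(4); you via injectivity of $\widetilde\chi$ plus a dimension count --- equivalent arguments), and close the sandwich with the Schur--Jacobson bound; your part (2) is essentially the paper's argument word for word.

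The one genuine divergence is your middle paragraph, and there you have made the step harder than it is. What you flag as the main obstacle --- that $\widetilde K$ must be recognised as a commutative \emph{associative} subalgebra of $\End{M}$ before the dimension bound can be invoked --- is not an obstacle. The result the paper cites (Jacobson, ``Schur's theorems on commutative matrices'', Theorem~1) is stated over an arbitrary field for linearly independent \emph{pairwise commuting matrices}; that is, it bounds the dimension of any commuting linear subspace of $M_m(\F)$, which is exactly an abelian Lie subalgebra of $\gl(\widetilde I)$. Since $\widetilde K$ is abelian (Proposition~\ref{prop:Ltilde}(2), because $K$ is abelian) and embeds in $\gl(\widetilde I)$ by construction, the bound applies at once. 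Moreover, even if one only knew the bound for commutative associative subalgebras, the associative subalgebra generated by a commuting subspace is itself commutative and contains that subspace, so the bound transfers with no computation --- so in either reading of Schur--Jacobson your detour is dispensable. That said, the detour itself is sound: your induction modulo $M\w(L)$ is the same one the paper performs in Lemma~\ref{lemmaU_M}, and the resulting identification of $\widetilde K$ with the non-unital associative algebra generated by $\ad_{K,M}(K)$ is a sharper structural fact than the paper records in this proof; it is precisely what underlies Lemma~\ref{lemmaU_M} and the explicit computations of $\lie{M}$ and $\asso{M}$ in Section~\ref{sec:examp}. What the paper's route buys is brevity; what yours buys is that extra structural description, at the cost of a page of computation the corollary does not need.
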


\begin{proof}(1) 
By Proposition~\ref{prop:Ltilde}(5), it suffices to show that
$\dim L/C_L(M)=\dim \widetilde{K}$. Since  $L'\subseteq M$, the Lie algebras $K=L/M$ and $\widetilde K$ are abelian. By Proposition~\ref{prop:Ltilde}(4),
$J\cap L=C_L(M)$, and hence
\[
L/C_L(M)=L/(J\cap L)\cong (L+J)/J\leq U(L)/J\cong \widetilde K.
\]
Thus $\dim \widetilde K\geq \dim L/C_L(M)\geq \lfloor (\dim M)^2/4\rfloor+1$. On the other hand,  $\widetilde K$ is an abelian subalgebra of $\gl(\widetilde I)$ and the maximal dimension of an abelian subalgebra of $\gl(\widetilde I)$  is  $\lfloor (\dim M)^2/4\rfloor+1$ (see  \cite[Theorem 1]{Jacobson}). Thus $\widetilde{K}\leq \lfloor (\dim M)^2/4\rfloor+1$, and so $\dim \widetilde K=\dim L/C_L(M)$ must hold.

(2) Since $C_L(M)=M$, we obtain $J\cap L=M$
by Proposition~\ref{prop:Ltilde}(4).
This implies that
$$
J\cap K=J\cap L\cap K=M\cap K=0.
$$
On the other hand,
$\dim U(L)/J=\dim\widetilde K$ and we have seen in the
proof of statement  (1) that $\dim K=\dim\widetilde K$. Thus $J$ and $K$
are linear subspaces of $U(L)$ such that $J\cap K=0$ and
$\dim U(L)/J=\dim K$. As $K$ is finite-dimensional, the vector space direct
decomposition $U(L)=J\oplus K$ follows.
\end{proof}

The next lemma can be used to explicitly determine $\asso M$ for some Lie
algebras.

\begin{lemma}\label{lemmaU_M}
  Let $L$ be a finite-dimensional Lie algebra, suppose that $M$ is an abelian ideal of $L$ and let $x\in L$ be such that  $L=C_L(M)\rtimes \langle x\rangle$. We have that
\[
\asso{M}=\F[x]/(f(x))
\]
where $f(x)$ is the smallest degree polynomial such that $f((\ad\, x)|_M)=0$ and $f(0)=0$. 
\end{lemma}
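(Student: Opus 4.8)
The plan is to realize $\asso M = U(L)/\J$ (the case $\m=\w(L)$) as a quotient of the one-variable polynomial ring $\F[x]$ and then to pin down the relevant polynomial. Write $C=C_L(M)$, so that $L=C\rtimes\langle x\rangle$. First I would record two facts. By Proposition~\ref{prop:Ltilde}(4), applied with $\m=\w(L)$, we have $CU(L)\subseteq\J$; and by Lemma~\ref{lemmadixmer}(2), $U(L)/CU(L)\cong U(L/C)\cong\F[x]$, the isomorphism sending the class of $x$ to the polynomial variable. A short PBW bookkeeping (ordering a basis of $C$ before $x$) also shows that $U(L)=\F[x]+CU(L)$ and $\F[x]\cap CU(L)=0$. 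Since $CU(L)\subseteq\J$, the composite $\F[x]\hookrightarrow U(L)\twoheadrightarrow U(L)/\J$ is surjective with kernel $\F[x]\cap\J$, giving
\[
\asso M=U(L)/\J\cong\F[x]/(\F[x]\cap\J).
\]

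It then remains to compute the ideal $\F[x]\cap\J=\F[x]\cap J\cap\w(L)$ and to match it with $(f)$. Set $\varphi=(\ad x)|_M\in\gl(M)$. Intersecting with $\w(L)$ is immediate: a polynomial $p(x)$ lies in $\w(L)$ exactly when $p(0)=\varepsilon_L(p(x))=0$. The substantive point is to decide when such a $p(x)$ lies in $J$, i.e.\ when $\beta(p(x)+MU(L))=0$ on $\widetilde I=MU(L)/M\w(L)$. Here is the observation I would exploit: although $\beta$ is only a Lie homomorphism in general, its restriction to augmentation-ideal elements agrees with the left-multiplication action of $U(L)$ on $\widetilde I$. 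Indeed, for $w\in\w(L)$ and $a\in MU(L)$ one has $aw\in MU(L)\w(L)=M\w(L)$, so $[w,a]\equiv wa\pmod{M\w(L)}$. Left multiplication is a genuine associative action on the module $\widetilde I$, and under the identification $\vartheta:M\to\widetilde I$ of Proposition~\ref{prop:Ltilde} it sends $x$ to $\varphi$, because $xm\equiv[x,m]=\varphi(m)\pmod{M\w(L)}$. Consequently, for $p(x)$ with $p(0)=0$, the operator $\beta(p(x))$ is carried by $\vartheta$ to $p(\varphi)$, so $p(x)\in J$ if and only if $p(\varphi)=0$.

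Combining the two conditions gives $\F[x]\cap\J=\{\,p(x)\in\F[x]:p(0)=0\text{ and }p(\varphi)=0\,\}$. This set is visibly closed under multiplication by arbitrary polynomials, hence is an ideal of the PID $\F[x]$, and is therefore generated by its monic element of least degree. That element is precisely the polynomial $f$ of the statement, the least-degree polynomial satisfying $f(\varphi)=f((\ad x)|_M)=0$ and $f(0)=0$. This yields $\asso M=\F[x]/(f)$, as required.

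I expect the main obstacle to be the middle step: converting the Lie-theoretic membership condition \emph{$p(x)\in J$} into the polynomial identity \emph{$p(\varphi)=0$}. The difficulty is that $\beta$ is not an algebra homomorphism, so one cannot naively assert $\beta(x^k)=\beta(x)^k$; what rescues the argument is the restriction to $\w(L)$, where the Lie (adjoint) action and the associative (left-multiplication) action on $\widetilde I$ coincide, restoring multiplicativity and letting powers of $x$ act as powers of $\varphi$.
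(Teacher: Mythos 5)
Your proposal is correct, and its overall skeleton matches the paper's proof exactly: both use Proposition~\ref{prop:Ltilde}(4) to get $C_L(M)U(L)\subseteq\J$, identify $U(L)/C_L(M)U(L)\cong U(L/C_L(M))\cong\F[x]$ via Lemma~\ref{lemmadixmer}(2), conclude $\asso{M}\cong\F[x]/(\F[x]\cap\J)$, and then reduce everything to the characterization ``$p(x)\in J$ and $p(0)=0$ if and only if $p((\ad x)|_M)=0$ and $p(0)=0$.'' Where you diverge is in how that characterization is proved. The paper argues computationally: it shows by induction on $n$ that $[x^n,y]\equiv(\ad x)^n y\pmod{M\w(L)}$ for $y\in M$, sums up to get $[p(x),y]\equiv p(\ad x)y$, and then uses $L\cap M\w(L)=M'=0$ (Lemma~\ref{lemmadixmer}(3)) to convert ``$[p(x),y]\in M\w(L)$ for all $y\in M$'' into ``$p(\ad x|_M)=0$.'' You instead observe that for $w\in\w(L)$ and $a\in MU(L)$ one has $aw\in MU(L)\w(L)=M\w(L)$, so on augmentation-ideal representatives the adjoint action $\beta$ coincides with the left-multiplication action of $U(L)$ on the left module $\widetilde I$; since left multiplication is an associative action and $x$ acts as $\vartheta\,\varphi\,\vartheta^{-1}$ with $\varphi=(\ad x)|_M$, multiplicativity is restored and $p(x)$ acts as $p(\varphi)$. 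This is a genuinely cleaner mechanism for the same step: it replaces the induction by a structural statement (adjoint $=$ left multiplication modulo $M\w(L)$ on $\w(L)$), and as a bonus it bypasses the paper's appeal to $L\cap M\w(L)=M'=0$, since $\vartheta:M\to\widetilde I$ being an isomorphism already lets you read off $p(\varphi)=0$ directly from $\beta(p(x))=0$. Both arguments ultimately rest on the same vanishing $MU(L)\w(L)=M\w(L)$, so the difference is one of packaging rather than substance, but your packaging generalizes more readily (it identifies $\beta|_{\w(L)/MU(L)}$ with a bona fide $U(L)$-module structure, which is the conceptual reason the polynomial calculus works).
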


\begin{proof}
Let $J$ and $\J$ be as in Proposition~\ref{prop:Ltilde}.  By Proposition~\ref{prop:Ltilde}(4), $C_L(M)\subseteq J$ and, clearly,
  $C_L(M)\subseteq \w(L)$. Thus $C_L(M)U(L)\subseteq\J$. Therefore
  \[
  \asso{M}=U(L)/\J\cong (U(L)/(C_L(M)U(L)))/(\J/C_L(M)U(L)).
  \]
  Furthermore, by Lemma~\ref{lemmadixmer}(2),
  \[
  U(L)/(C_L(M)U(L))\cong U(L/C_L(M))
  \]
  and, since $L/C_L(M)\cong \left<x\right>$, we have that $U(L/C_L(M))\cong \F[x]$.
  Therefore, the composition
  \[
  \F[x]\rightarrow U(L/C_L(M))\rightarrow (U(L)/(C_L(M)U(L)))/(\J/C_L(M)U(L))
  \rightarrow \asso{M}
  \]
  is a surjective homomorphism from $\F[x]$ to
  $\asso{M}$ with kernel $\F[x]\cap \J$. In particular $\asso{M}$ is a commutative
  algebra and 
  $\asso{M}=\F[x]/(\F[x]\cap\, \J)$. As $\F[x]$ is a principal ideal domain, the ideal $\F[x]\cap\, \J$ is generated by a smallest degree polynomial in $\F[x]\cap\, \J$. Given a polynomial $f(x)\in\F[x]$ we have that
  $f(x)\in \J=J\cap\, \w(L)$ if and only if $f(x)\in J$ and $f(0)=0$. Since $f(0)=0$, $f(x)=a_1x+a_2x^2+\cdots+a_kx^k$, and so $f(x)\in J$ if and only if 
	$$[f(x),y]\in M\w(L)$$
  for every $y\in M.$

  We claim that
	\begin{equation}\label{eqf(x)}
	  (\ad \,x^n)y\equiv (\ad x)^n\,y \pmod{M\w(L)}
          \mbox{ for all $y\in M$ and $n\geq 1$}.
	\end{equation}
	 In fact, the claim is clear for $n=1$. Assume that~(\ref{eqf(x)}) is valid for $n\geq 1$ and note that
	 \begin{multline*}
	 	[x^{n+1},y]=x[x^n,y]+[x,y]x^n\equiv x(\ad\,x^n)y\equiv x(\ad\,x)^ny \equiv (\ad\,x)^{n+1}y\mod M\w(L).
 	 \end{multline*}
 	Now claim~\eqref{eqf(x)} follows by induction. This also implies that 
	$$[f(x),y]\equiv f(\ad\,x)y\mod M\w(L).$$
	Thus, since $f(\ad\,x)y\in L$ for every $y\in M$, we have that $f(x)\in J$ if and only if $$f(\ad\,x)y\in M\w(L)\cap L=M'=0;$$
	see Lemma~\ref{lemmadixmer}(3).
	That is $f(x)\in J$ if and only if $f(\ad\,x|_M)=0$. 
\end{proof}

\begin{corollary}\label{corollaryU_M}
Let $L$ be a finite-dimensional Lie algebra. Suppose that $M$ is an abelian ideal of $L$ and $x\in L$ such that  $L=C_L(M)\rtimes \langle x\rangle$. Then $\lie{M}\cong M\rtimes L/C_L(M)$ if and only if $(\ad\, x|_M)^2=\lambda \,(\ad\, x)|_M$ for some $\lambda\in \F$.
\end{corollary}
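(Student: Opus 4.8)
The plan is to reduce the statement to a dimension comparison and then to a question about the minimal polynomial of the operator $T=(\ad\,x)|_M$. First I would observe that, since $L=C_L(M)\rtimes\langle x\rangle$, we have $\dim L/C_L(M)=1$, and moreover $x\notin C_L(M)$, so $T\neq 0$. By Proposition~\ref{prop:Ltilde}(5) the canonical injection $\widetilde\chi$ is an isomorphism (that is, $\lie{M}\cong M\rtimes L/C_L(M)$) precisely when $\dim\widetilde K=\dim L/C_L(M)=1$. Since any Lie algebra isomorphism forces equality of dimensions, the condition $\lie{M}\cong M\rtimes L/C_L(M)$ is in fact equivalent to $\dim\widetilde K=1$.

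Next I would translate $\dim\widetilde K=1$ into a condition on $\deg f$. By Proposition~\ref{prop:Ltilde}(3), $\dim\asso{M}=\dim U(L)/\J=\dim U(L)/J+1=\dim\widetilde K+1$, while Lemma~\ref{lemmaU_M} gives $\asso{M}\cong\F[x]/(f(x))$, so $\dim\asso{M}=\deg f$. Combining these yields $\dim\widetilde K=\deg f-1$, and hence the condition $\dim\widetilde K=1$ is equivalent to $\deg f=2$. It therefore remains to show that $\deg f=2$ if and only if $T^2=\lambda T$ for some $\lambda\in\F$.

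For this last equivalence I would analyze $f$ through the minimal polynomial $m(t)$ of $T$. As $f$ is the least-degree polynomial with $f(T)=0$ and $f(0)=0$, it is a multiple of $m$; writing $f=m\cdot g$ and inspecting $f(0)=m(0)g(0)$, one sees that $f=m$ (so $\deg f=\deg m$) when $t\mid m$, and $f=t\,m$ (so $\deg f=\deg m+1$) when $t\nmid m$. Using $T\neq 0$ (so $m\neq t$ and $\deg m\geq 1$), a short case check then shows $\deg f=2$ exactly when $m\mid t(t-\lambda)$ for some $\lambda$, i.e.\ exactly when $T^2=\lambda T$. Concretely, $\deg f=2$ forces either $m=t(t-\lambda)$ (the case $t\mid m$, $\deg m=2$) or $m=t-\lambda$ with $\lambda\neq 0$ (the case $t\nmid m$, $\deg m=1$), and both give $T^2=\lambda T$; conversely, $T^2=\lambda T$ with $T\neq 0$ forces $m\in\{t-\lambda,\ t(t-\lambda),\ t^2\}$, each of which yields $\deg f=2$.

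The main obstacle is the bookkeeping in this final case analysis, specifically tracking whether $0$ is a root of $m$, since that is what distinguishes $\deg f=\deg m$ from $\deg f=\deg m+1$. The hypothesis $T\neq 0$, guaranteed by the splitting $L=C_L(M)\rtimes\langle x\rangle$, is precisely what rules out the degenerate possibility $m=t$ that would otherwise break the equivalence.
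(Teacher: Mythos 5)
Your proposal is correct and follows essentially the same route as the paper's proof: Proposition~\ref{prop:Ltilde}(5) and (3) reduce the isomorphism $\lie{M}\cong M\rtimes L/C_L(M)$ to $\dim\asso{M}=2$, and Lemma~\ref{lemmaU_M} converts that into the condition that the minimal-degree polynomial $f$ with $f(\ad\,x|_M)=0$ and $f(0)=0$ has degree two, i.e.\ $(\ad\,x|_M)^2=\lambda\,(\ad\,x|_M)$. Your final case analysis via the minimal polynomial, together with the explicit observation that $\ad\,x|_M\neq 0$ (which the splitting guarantees), merely fills in details that the paper asserts in a single line.
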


\begin{proof}
  By Proposition~\ref{prop:Ltilde}(5),
  $\lie{M}\cong M\rtimes L/C_L(M)$ is equivalent to the condition $\dim \widetilde{K}=\dim L/C_L(M)=1$, which (by Proposition~\ref{prop:Ltilde}(3))
  amounts to $\dim \asso{M}=2$. By Lemma~\ref{lemmaU_M} $\dim \asso{M}=2$ if and only if the degree of the smallest degree polynomial such that $f(\ad\, x|_M)=0$ and $f(0)=0$ is 2. This condition is equivalent to the condition that
  $f(x)=x^2-\lambda\,x$ with some $\lambda\in\F$,
  which, in turn, amounts to $(\ad\,x|_M)^2=\lambda\,\ad\,x|_M.$
\end{proof}

\section{$\lie{M}$ and $\asso{M}$ for some  4-dimensional solvable Lie algebras}
\label{sec:examp}

In this section, we determine the Lie algebra $\lie{M}$
(defined in Proposition~\ref{prop:Ltilde})
and the associative algebra $\asso{M}$ (defined in~\eqref{eq:UM})
for several of the isomorphism classes of
solvable Lie algebras $L$ of dimension 4.
A classification of these Lie algebras was given by de Graaf  \cite{deGraaf}, we reproduce this classification in Section~\ref{section:proof}. In each of the following calculations we determine $\lie{M}$ and $\asso{M}$ taking $M=L'$.

\subsection{$\boldsymbol{L=M^3_0}$}\label{subsec:m3}
First we calculate $\lie M$ for the Lie algebra $L=M^3_0$ defined, over an arbitrary field $\F$, by the multiplication table
$$L=M^3_0=\langle x_1,x_2,x_3,x_4\mid [x_4,x_1]=x_1, [x_4,x_2]=x_3, [x_4,x_3]=x_3\rangle.$$
Note that only the non-zero products are displayed in the presentation of
$L$; for example $[x_1,x_2]=0$, but it is not explicitly stated.
We have that $M=\langle x_1,x_3\rangle$ and $C_L(M)=
\langle x_1,x_2,x_3\rangle$, thus $L=C_L(M)\rtimes\langle x_4\rangle$. Note that 
\begin{equation*}\label{eq6}
(\ad\,x_4)|_M=\begin{pmatrix}1 & 0 \\ 0 & 1\end{pmatrix}.
\end{equation*}
Hence, applying Lemma~\ref{lemmaU_M} and Corollary~\ref{corollaryU_M}, we have that $$\lie{M}\cong \langle x_1,x_3\rangle\rtimes \langle x_4 \rangle \quad \mbox{ and }\quad \asso{M}=\F[x_4]/(x_4^2-x_4),$$ since $f(x)=x^2-x$ is the smallest degree polynomial such that $f(\ad\,x_4|_M)=0$ and $f(0)=0$.

\subsection{$\boldsymbol{L=M^6_{0,b}}$}\label{subsec:m6}
Let $\F$ be a field and let us now calculate $\lie {M}$
and $\asso M$ for the following family of isomorphism classes of Lie algebras
$$L=M^6_{0,b}=\langle x_1,x_2,x_3,x_4\mid [x_4,x_1]=x_2, [x_4,x_2]=x_3, [x_4,x_3]=bx_2+x_3\rangle,$$ 
where $b\in \F$. In this family, $M^6_{0,b}\cong M^6_{0,c}$ if and only if $b=c$; see Group~5 in Section~\ref{section:proof}. 
We have that $M=\langle x_2,x_3\ra $ and $C_L(M)=\la x_1,x_2,x_3\ra$, thus $L=C_L(M)\rtimes \la x_4 \ra$. Note that 
\begin{equation*}
\ad\,x_4|_M=\begin{pmatrix}0 & b\\ 1 & 1\end{pmatrix}.
\end{equation*}

First suppose that $b=0$. Then we have that $((\ad\,x_4)|_M)^2=\ad\,x_4|_M$. Hence, applying Lemma~\ref{lemmaU_M} and Corollary~\ref{corollaryU_M}, we obtain 
$$\asso{M}=\F[x_4]/(x_4^2-x_4)\quad \mbox{ and } \quad \lie{M}\cong \langle x_2,x_3\rangle \rtimes \langle x_4\rangle.$$

Now we suppose that $b\neq 0$. The smallest degree polynomial $f(x)$ such that $f((\ad\,x_4)|_M)=0$ and $f(0)=0$ is $f(x)=x^3-x^2-bx$. Thus by Lemma~\ref{lemmaU_M},
\[
\asso{M}=\F[x_4]/(x_4^3-x_4^2-bx_4).
\]
Choosing the basis $\{\overline 1,\overline{x_4}, \overline{x_4^2-x_4}\}$ of $\asso{M}$, we have that a basis for $\widetilde K$ is $\{\overline{x_4}, \overline{x_4^2-x_4}\}$. Now, as $\{\overline{x_2},\overline{x_3}\}$ is a basis of $\widetilde I$ and the action of $\widetilde K$ on $\widetilde I$ is induced by the map $\beta$ in~(\ref{action}), we can calculate the brackets in $\lie{M}=\widetilde I\rtimes_\beta \widetilde K$. First we note that
$$
[x_4^2,x_2]=[x_4,x_2]x_4+x_4[x_4,x_2]=x_3x_4+x_4x_3\equiv
x_3x_4+[x_4,x_3]\equiv bx_2+x_3\pmod{M\w(L)}.
$$
and that
\begin{eqnarray*}
[x_4^2,x_3] &=& [x_4,x_3]x_4+x_4[x_4,x_3]=(bx_2+x_3)x_4+x_4(bx_2+x_3)\\&\equiv&
bx_4x_2+x_4x_3\equiv bx_3+bx_2+x_3=(b+1)x_3+bx_2\pmod{M\w(L)}.
\end{eqnarray*}
Thus $[x_4^2-x_4,x_2]\equiv bx_2\pmod{M\w(L)}$ and $[x_4^2-x_4,x_3]\equiv bx_3\pmod{M\w(L)}$. Therefore, $\lie{M}$ is the 4-dimensional Lie algebra spanned by $\{\overline{x_2},\overline{x_3}, \overline{x_4}, \overline{x_4^2-x_4}\}$ and non-zero brackets 
$$
[\overline{x_4},\overline{x_2}]=\overline{x_3},\ 
[\overline{x_4},\overline{x_3}]=b\overline{x_2}+\overline{x_3},\ 
[\overline{x_4^2-x_4},\overline{x_2}]=b\overline{x_2},\ 
[\overline{x_4^2-x_4},\overline{x_3}]=b\overline{x_3}.
$$

\subsection{$\boldsymbol{L=M^7_{0,b}}$}\label{subsec:m7}
Let $\F$ be a field and let us now calculate $\lie{M}$ and $\asso{M}$
for the following family of isomorphism classes of Lie algebra
$$
L=M^7_{0,b}=\langle x_1, x_2, x_3, x_4\mid [x_4,x_1]=x_2, [x_4,x_2]=x_3, [x_4,x_3]=bx_2 \rangle,
$$
where $b\in \F$. In this family $M^7_{0,b}\cong M^7_{0,c}$ if and only if there exists $\lambda\in \F^\times$ such that $b=\lambda^2\,c,$ see Section~\ref{section:proof}. 

We have that $M=\langle x_2,x_3\rangle$ and $C_L(M)=\langle x_1,x_2,x_3\rangle$, and thus $L=C_L(M)\rtimes \langle x_4\rangle$. Note that 
\begin{equation*}\label{eqm7}
(\ad\,x_4)|_M=\begin{pmatrix}0 & b \\ 1 & 0\end{pmatrix}.
\end{equation*}

Suppose that $b=0$; then $(\ad x_4|_M)^2=0$. Applying Corollary~\ref{corollaryU_M} and Lemma~\ref{lemmaU_M}, we have that 
$$\lie{M}\cong\langle x_2,x_3\rangle\rtimes\langle x_4\rangle \quad\mbox{ and }\quad \asso{M}=\F[x_4]/(x_4^2).$$ 

Now suppose that $b\neq 0$. Then the smallest degree monic polynomial $f(x)$ such that $f((\ad\,x_4)|_M)=0$ and $f(0)=0$ is $f(x)=x^3-bx$. Thus,
by Lemma~\ref{lemmaU_M},
\begin{equation}\label{eq:x3}
  \asso{M}=\F[x_4]/(x_4^3-bx_4).
\end{equation}
Choosing the basis $\{\overline{1},\overline{x_4},\overline{x_4^2}\}$ of $\asso{M}$, we have that $\widetilde K=\{\overline{x_4},\overline{x_4^2}\}$. Since $\{\overline{x_2}, \overline{x_3}\}$ is a basis of $\widetilde I$ and  the action of $\widetilde K$ over $\widetilde I$ is induced by the map $\beta$ in~(\ref{action}), we can calculate the brackets in $\lie{M}=\widetilde I\rtimes_\beta \widetilde K$. For this, note that 
$$[x_4^2,x_2]=[x_4,x_2]x_4+x_4[x_4,x_2]=x_3x_4+x_4x_3\equiv
x_3x_4+[x_4,x_3]\equiv bx_2\pmod{M\w(L)}, $$
and 
$$[x_4^2,x_3]=[x_4,x_3]x_4+x_4[x_4,x_3]=bx_2x_4+bx_4x_2\equiv
bx_2x_4+b[x_4,x_2]\equiv bx_3\pmod{M\w(L)}, $$
Therefore $\lie{M}$ is a 4-dimensional Lie algebra spanned by $\{\overline{x_2}, \overline{x_3}, \overline{x_4}, \overline{x_4^2}\}$ and non-zero brackets 
%
  $$
  [\overline{x_4},\overline{x_2}]=\overline{x_3},\
  [\overline{x_4},\overline{x_3}]=b\overline{x_2},\
  [\overline{x_4^2},\overline{x_2}]=b\overline{x_2},\
  [\overline{x_4^2},\overline{x_3}]=b\overline{x_3}.
  $$

  It is interesting to notice that in the family $M_{0,b}^7$,
  the isomorphism type of the original Lie algebra $L$ is not determined by the Lie algebra $\lie{M}$;
  it is, however, determined by the commutative algebra $\asso{M}$
  in~\eqref{eq:x3} as implied by the following lemma.

  \begin{lemma}\label{lem:comalg} Let $\F$ be a field
    and if $\mbox{char}\,\F=2$ then also assume that $\F$ is perfect.
    Let $b,c\in \F^\times$. Then $\F[x]/(x^3-bx)\cong \F[x]/(x^3-cx)$ if and only if
    there exists $\lambda\in\F^\times$ such that
    $c=\lambda^2b$.
\end{lemma}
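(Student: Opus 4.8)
The plan is to analyze the $\F$-algebra isomorphism $\F[x]/(x^3-bx)\cong\F[x]/(x^3-cx)$ structurally, by comparing the decompositions of the two algebras into local factors. Since $b,c\neq 0$, the polynomial $x^3-bx=x(x^2-b)$ has a simple root at $0$, so by the Chinese Remainder Theorem we get $\F[x]/(x^3-bx)\cong \F\times \F[x]/(x^2-b)$, and similarly for $c$. An $\F$-algebra isomorphism must match up the idempotents and hence the local factors; the factor $\F$ corresponds on both sides to the root $x=0$, so the isomorphism must restrict to an $\F$-algebra isomorphism $\F[x]/(x^2-b)\cong\F[x]/(x^2-c)$. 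The heart of the lemma is therefore the statement that two quadratic $\F$-algebras $\F[x]/(x^2-b)$ and $\F[x]/(x^2-c)$ (with $b,c\neq 0$) are isomorphic if and only if $c=\lambda^2 b$ for some $\lambda\in\F^\times$.

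For that reduced statement I would argue in both directions. For the ``if'' direction, if $c=\lambda^2 b$ then the assignment $x\mapsto \lambda^{-1}x$ (equivalently, sending the square root of $b$ to $\lambda^{-1}$ times the square root of $c$) extends to an $\F$-algebra isomorphism, which is a direct check. For the ``only if'' direction, suppose $\varphi:\F[x]/(x^2-b)\to\F[x]/(x^2-c)$ is an $\F$-algebra isomorphism. Write $\theta$ for the image of $x$, so $\theta=u+vx$ with $u,v\in\F$ in $\F[x]/(x^2-c)$. Since $\varphi(x)^2=\varphi(b)=b$, we get $\theta^2=b$. Expanding $\theta^2=(u^2+v^2c)+2uvx$ and comparing coefficients gives $2uv=0$ and $u^2+v^2c=b$. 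The generic situation is $u=0$, which yields $b=v^2c$, i.e.\ $c=(v^{-1})^2 b$ after noting $v\neq 0$ (as $\varphi$ is surjective, $\theta$ cannot be a scalar). The subtlety is the characteristic-two case, where $2uv=0$ is automatic and does not force $u=0$.

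The main obstacle, and the reason the hypothesis ``if $\ch\F=2$ then $\F$ is perfect'' appears, is precisely this characteristic-two analysis. When $\ch\F=2$ the relation becomes $u^2+v^2c=b$ with no constraint linking $u$ and $v$, so I cannot immediately conclude $u=0$. Here I would use perfectness: in a perfect field of characteristic two the Frobenius $t\mapsto t^2$ is surjective, so $u^2$ has a square root and the obstruction dissolves. Concretely, $b=u^2+v^2c$ can be rewritten, using that every element is a square, to exhibit $b/c$ (or $bc$) as a square, giving the desired $c=\lambda^2 b$; one checks that $v\neq 0$ so that division by $v^2$ is legitimate, again because a scalar $\theta$ would contradict surjectivity of $\varphi$. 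I expect the bulk of the writing to be this case split, with the odd-characteristic case being a short coefficient comparison and the even-characteristic case requiring the perfectness hypothesis to extract the square root.

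For completeness I would also record why the CRT reduction is valid over an arbitrary field: the factorization $x^3-bx=x\cdot(x^2-b)$ has coprime factors $x$ and $x^2-b$ whenever $b\neq 0$ (their resultant is $-b\neq 0$), so the splitting $\F[x]/(x^3-bx)\cong\F\times\F[x]/(x^2-b)$ holds with no restriction on $\F$, and the perfectness hypothesis is needed only at the final square-extraction step in characteristic two.
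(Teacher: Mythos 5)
Your proof is correct, and its skeleton matches the paper's: split off the root at $0$ by CRT, deduce $\F[x]/(x^2-b)\cong\F[x]/(x^2-c)$, then compare coefficients of the image of $\bar x$, with the converse given by $x\mapsto\lambda^{-1}x$. The genuine difference is in the middle reduction, and yours is the more uniform argument. The paper splits into cases according to whether $b\in\F^2$: when $b$ is a square it identifies both cubic quotients with $\F\oplus\F\oplus\F$, and only in the non-square case (where $\F[x]/(x^2-b)$ is a field) does it invoke uniqueness of the decomposition into \emph{simple} algebras to pass to the quadratic factors. You instead invoke uniqueness of the decomposition into \emph{local} factors (primitive idempotents), which gives the cancellation $\F\times A\cong\F\times B\Rightarrow A\cong B$ with no squareness hypothesis, and your coefficient comparison likewise needs none, since $v\neq 0$ already follows from surjectivity; so you avoid the paper's case split entirely. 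Two caveats. First, your clause ``the factor $\F$ corresponds on both sides to the root $x=0$, so the isomorphism must restrict'' is not literally true: when $b\in\F^2$ and the characteristic is odd there are three primitive idempotents and an isomorphism may permute them, so nothing restricts; what makes your reduction valid is exactly the multiset cancellation of local factors that you state just before, so that should be the justification of record. Second, in characteristic two perfectness already makes the ``only if'' direction vacuous --- every element of $\F^\times$ is a square, so $c=\lambda^2 b$ always has a solution $\lambda$ --- and this is precisely how the paper disposes of that case; your extraction of a square from $b=u^2+v^2c$ also works (writing $c=\gamma^2$, in characteristic two one has $u^2\gamma^{-2}+v^2=(u\gamma^{-1}+v)^2$, which is nonzero because $b\neq 0$), but it is more work than necessary.
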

  \begin{proof}
First, if $\lambda\in\F^\times$ such that $c=\lambda^2b$, then the automorphism
of $\F[x]$ induced by $x\mapsto \lambda^{-1} x$ takes $x^3-bx$ to
$\lambda^{-3}x-\lambda^{-1}bx$ and hence the image of the ideal $(x^3-bx)$ is
$(x^3-\lambda^2bx)=(x^3-cx)$. Therefore the quotients
$\F[x]/(x^3-bx)$ and $\F[x]/(x^3-cx)$ are isomorphic. This argument
implies the claim of the lamma for perfect fields of characteristic two.

Assume now $\mbox{char}\,\F\neq 2$ and
also that $\F[x]/(x^3-bx)$ and $\F[x]/(x^3-cx)$ are isomorphic.
Suppose first that $b\in \F^2$; let $b=\beta^2$ for some
$\beta\in\F^\times$. Then $x^3-bx=x(x-\beta)(x+\beta)$ and so,
by the Chinese Remainder Theorem, 
\[
\F[x]/(x^3-bx)\cong \F[x]/(x)\oplus \F[x]/(x-\beta)\oplus \F[x]/(x+\beta)
\cong \F\oplus\F\oplus \F.
\]
Since $\F[x]/(x^3-bx)\cong \F[x]/(x^3-cx)$, the decomposition
$\F[x]/(x^3-cx)\cong\F\oplus\F\oplus\F$ must hold which forces that
$x^2-c$ is reducible, which  implies that $c\in \F^2$. This argument shows
that if one of $c$ or $b$ is an element of $\F^2$, then both are, and in
this case the conclusion of the lemma holds with $\lambda=1$.

Thus, only remains to analyze the case $b,c\not\in \F^2$.  Then 
$$\F[x]/(x^3-bx)=\F[x]/(x)\oplus \F[x]/(x^2-b)$$
and 
$$\F[x]/(x^3-cx)=\F[x]/(x)\oplus \F[x]/(x^2-c)$$
are the decompositions of the respective algebras into direct sums
of simple algebras.
Since $\F[x]/(x^3-bx)\cong \F[x]/(x^3-cx)$, it follows that
\[
\F[x]/(x^2-b)\cong\, \F[x]/(x^2-c).
\]
Suppose that $\delta+\lambda\,\overline{x}\in \F[x]/(x^2-c)$
is the image of $\overline{x}\in \F[x]/(x^2-b)$ under an isomorphism $\alpha:\F[x]/(x^2-b)\to\F[x]/(x^2-c)$,
where $\delta,\lambda\in\F$ and necessarily $\lambda\neq 0$.
Then
\[
0=\alpha(\overline x^2-b)=(\delta+\lambda \overline x)^2-b=
\delta^2+2\delta\lambda \overline x+\lambda^2c-b.
\]
This implies that $\delta=0$ and that $\lambda^2c=b$ as required.
\end{proof}

\section{The Proof of Theorem \ref{theorem1}}\label{section:proof}

For the proof of Theorem~\ref{theorem1}, we use the classification of
the 4-dimensional solvable Lie algebras given by de Graaf  \cite{deGraaf}.
Since  $\dim L$ is determined by the isomorphism type of
$U(L)$ (Lemma~\ref{lemmausefi2}(1)),
and the isomorphism problem for    enveloping algebras of solvable Lie algebras of dimension less than or equal to three
was solved in  \cite{Chun} over an arbitrary field,
we concentrate on the case of solvable Lie algebras of dimension 4.
The first step is to divide the algebras into six groups in such a way
that two algebras in different groups cannot have isomorphic   
enveloping algebras. The second step will be to verify 
that two non-isomorphic Lie algebras inside the same group cannot have  isomorphic   
enveloping algebras. 

First we state the classification of 3-dimensional solvable Lie algebras.
Recall that only the non-zero brackets are displayed in the multiplication tables
of Lie algebras.

\begin{theorem}\label{th:3dim}
  Let $L$ be a 3-dimensional solvable Lie algebra over a field $\F$. Then
  $L$ is isomorphic to one of the following algebras:
  \begin{enumerate}
    \item $L^1=\left<x_1,x_2,x_3\right>$ (the abelian Lie algebra);
    \item $L^2=\left<x_1,x_2,x_3\mid [x_3,x_1]=x_1, [x_3,x_2]=x_2\right>;$
    \item $L^3_a=\left<x_1,x_2,x_3\mid [x_3,x_1]=x_2, [x_3,x_2]=ax_1+x_2\right>$ with some $a\in\F$;
    \item $L^4_a=\left<x_1,x_2,x_3\mid [x_3,x_1]=x_2, [x_3,x_2]=ax_1\right>$ with
      some $a\in\F$.
  \end{enumerate}
  If $L^i_j\cong L^u_v$ then $i=u$. Moreover $L^3_a\cong L^3_b$ if and only if
  $a=b$ and $L^4_a\cong L^4_b$ if and only if 
there is an $\alpha\in\F^*$ with $a=\alpha^2 b$.
\end{theorem}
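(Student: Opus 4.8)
The plan is to stratify the $3$-dimensional solvable Lie algebras by $\dim L'$, where $L'$ is a characteristic ideal and hence an isomorphism invariant. Solvability forces $L'\neq L$, so $\dim L'\in\{0,1,2\}$. If $\dim L'=0$, then $L$ is abelian and equals $L^1$. If $\dim L'=1$, write $L'=\F z$; since $L'$ is an abelian ideal, $\ad$ induces a Lie homomorphism $\phi\colon L\to\gl(L')\cong\F$ with $\phi(z)=0$. When $\phi=0$ the element $z$ is central, $L/\F z$ is $2$-dimensional abelian on a basis $\overline x,\overline y$, and the only surviving bracket $[x,y]$ must be a nonzero multiple of $z$ (else $L'=0$); rescaling gives the Heisenberg algebra, which one checks is $L^4_0$. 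When $\phi\neq 0$ there is $x$ with $[x,z]=z$; choosing $y\in\ker\phi\setminus\F z$ and replacing $y$ by $y-\nu z$ to kill $[x,y]$ exhibits $L$ as the direct sum of the $2$-dimensional non-abelian algebra $\mathfrak r_2=\langle a,b\mid[a,b]=b\rangle$ with a $1$-dimensional center, namely $L^3_0$.

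The heart of the argument is the case $\dim L'=2$. First I would show $L'$ is abelian: the only alternative is $L'\cong\mathfrak r_2$, but since $(\ad x)|_{L'}$ must be a derivation of $L'$, a direct computation shows any such derivation maps $L'$ into $L''$, whence $L'=L''+[x,L']$ would be $1$-dimensional, a contradiction. With $L'$ abelian, pick $x\notin L'$ so that $L=\F x\ltimes L'$, and set $D=(\ad x)|_{L'}\in\gl(L')$. Since $[L,L]=\operatorname{im}D$ and $\dim L'=2$, the map $D$ is invertible. An isomorphism $L\to\tilde L$ carries $L'$ to $\tilde L'$, sends $x$ to $\lambda\tilde x$ modulo $\tilde L'$ with $\lambda\in\F^\times$, and restricts to an isomorphism $\psi\colon L'\to\tilde L'$ satisfying $\psi D=\lambda\tilde D\psi$; hence classifying these algebras is exactly classifying matrices in $\mathrm{GL}_2(\F)$ up to conjugacy and nonzero scaling.

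To carry out that matrix classification over an arbitrary field I would invoke the rational canonical form. A scalar $D$ reduces, after scaling, to the identity, giving $L^2$. A non-scalar $D$ is cyclic, hence conjugate to the companion matrix of its characteristic polynomial $x^2-tx+d$ with $d\neq 0$; since scaling sends $(t,d)\mapsto(\lambda t,\lambda^2 d)$, the condition $t=0$ is invariant. If $t\neq 0$ I would scale to $t=1$, obtaining the companion matrix $\begin{pmatrix}0&a\\1&1\end{pmatrix}$ with $a=-d\neq0$, i.e.\ $L^3_a$; no scaling freedom remains, so $a$ is a complete conjugacy invariant. If $t=0$ I get $\begin{pmatrix}0&a\\1&0\end{pmatrix}$ with $a=-d\neq0$, i.e.\ $L^4_a$, and the residual scaling acts by $a\mapsto\lambda^2 a$. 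I expect pinning down these two normal forms, and in particular extracting the precise equivalences ($a=b$ versus $a=\alpha^2 b$), to be the main technical point.

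Finally, for the distinctness assertions I would separate the six types by invariants: $\dim L'$ distinguishes $L^1$ (value $0$), the pair $\{L^3_0,L^4_0\}$ (value $1$), and the remaining algebras (value $2$); centrality of $L'$ separates $L^3_0$ from $L^4_0$; and for $\dim L'=2$, whether $D$ is scalar isolates $L^2$, while the scaling-invariant condition $\operatorname{tr}D=0$ separates the $L^4_a$ from the $L^3_a$. This proves that $L^i_j\cong L^u_v$ implies $i=u$. Within each family the conjugacy-plus-scaling analysis above yields $L^3_a\cong L^3_b\iff a=b$ and $L^4_a\cong L^4_b\iff a=\alpha^2 b$ for some $\alpha\in\F^\times$, completing the proof.
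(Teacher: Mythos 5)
Your proposal is correct, but it cannot be compared against a proof in the paper for a simple reason: the paper never proves Theorem~\ref{th:3dim}. It is stated as background and is imported from de Graaf's classification~\cite{deGraaf} (the same source used for the four-dimensional tables), so your self-contained argument is a genuinely different route — an actual proof versus a citation. Your strategy is the standard one underlying such classifications, and it holds up: stratify by $\dim L'$; handle $\dim L'\in\{0,1\}$ by elementary structure theory (your identifications of the Heisenberg algebra with $L^4_0$ and of $\mathfrak{r}_2\oplus\F$ with $L^3_0$ are correct); for $\dim L'=2$, your derivation computation correctly rules out a non-abelian $L'$, and the reduction of the isomorphism problem to conjugacy-plus-scaling of $D=(\ad x)|_{L'}$ in $\mathrm{GL}_2(\F)$ is exactly right, since an isomorphism must send $x$ to $\lambda\tilde x$ modulo $\tilde L'$ and intertwine $\psi D=\lambda\tilde D\psi$. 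The rational canonical form then gives the normal forms $L^2$, $L^3_a$ ($a\neq 0$, no residual scaling since $\operatorname{tr}D=1$ pins $\lambda=1$), and $L^4_a$ ($a\neq 0$, residual action $a\mapsto\lambda^2a$), and your invariants ($\dim L'$, centrality of $L'$, scalarness of $D$, vanishing of $\operatorname{tr}D$ — all well defined because changing the complement or the choice of $x$ only rescales $D$) separate the families, yielding precisely the stated conditions $a=b$ and $a=\alpha^2b$. What your route buys is a proof valid over an arbitrary field (including characteristic $2$) with no external dependency; what the paper's route buys is brevity and consistency with de Graaf's parametrization, which the rest of Section~\ref{section:proof} relies on. The only caveat is that several steps are deferred to ``direct computation'' (the derivation argument, the converse construction of an isomorphism from $\psi D\psi^{-1}=\lambda\tilde D$), but each of these checks out routinely.
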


In dimension four, we have  14 families of
isomorphism classes of solvable Lie algebras.
We group these Lie algebras into six groups in such a way that two Lie algebras
in distinct groups cannot have isomorphic  enveloping algebras.
We follow the notation introduced by de Graaf  \cite{deGraaf}.
Several of de Graaf's families, such as $M^3_a$, $M^{13}_a$, $M^6_{a,b}$
and $M^7_{a,b}$, are split between two groups.
Although Theorem  \ref{theorem1} is only
stated in characteristic  zero, we present the isomorphism types of
4-dimensional solvable Lie algebras over an arbitrary field and we 
consider the isomorphism problem of    enveloping algebras of
certain classes of such Lie algebras also over a field of prime characteristic.

The  4-dimensional solvable Lie algebras are presented in several tables,
each table corresponding to one group.
Each table contains five columns and the information contained in the columns
is as  follows.
\begin{description}
\item[Name] The name of the algebra in the notation of de Graaf  \cite{deGraaf}.
\item[relations] The non-trivial Lie brackets in the
  multiplication table of the algebra
  with respect to a basis $\{x_1,x_2,x_3,x_4\}$.
\item[char] Restrictions on the characteristic of the field
  over which the algebra is defined. If this entry is blank, then the
  algebra is defined over an arbitrary field.
\item[parameter] Restrictions on the parameters $a$, $b$ that may appear
  in the multiplication table of the algebra.
\item[isomorphism] The conditions under which isomorphism occurs between two
  algebras in the same family. If the family is described by a single parameter
  $a$, then the condition given in this entry is equivalent to the isomorphism
  $M^i_a\cong M^i_b$. If the family is described by two parameters $a$ and $b$, then
  the condition given in this entry is equivalent to the isomorphism
  $M^i_{a,b}\cong M^i_{c,d}$.
\end{description}

By the main result of  \cite{deGraaf}, there is no isomorphism between two
Lie algebras in different families, while two Lie algebras belonging to the
same family are isomorphic if and only if the condition in the
``isomorphism'' column of the corresponding line of the table holds.

\medskip

\noindent{\bf Group 1: The abelian Lie algebra.}
The first group contains only the abelian Lie algebra.

$$
\begin{array}{|c|c|c|c|c|}
  \hline
  \mbox{Name} & \mbox{relations} & \mbox{char} & \mbox{parameter} & \mbox{isomorphism}\\
  \hline
  M^1 & & & &\\\hline
  
  \end{array}
$$
\medskip

\noindent{\bf Group 2: The non-metabelian Lie algebras.}
A 4-dimensional non-metabelian solvable
Lie algebra over a field $\F$
is isomorphic to one of the following Lie algebras.

{\small$$
\begin{array}{|c|c|c|c|c|}
  \hline
  \mbox{Name} & \mbox{relations} & \mbox{char} & \mbox{parameter} & \mbox{isomorphism}\\
  \hline
  M^{12}&
  \begin{array}{c}{}[x_4,x_1]=x_1, [x_4,x_2]=2x_2\\
    {}[x_4,x_3]=x_3, [x_3,x_1]=x_2
    \end{array}
  & & &\\
  \hline
  M^{13}_a&
  \begin{array}{c}{}[x_4,x_1]=x_1+ax_3, \\{}[x_4,x_2]=x_2\\
    {}[x_4,x_3]=x_1, [x_3,x_1]=x_2\end{array}
    &  & a\in\F^* &

      a=b\\
  \hline
  M^{14}_a& \begin{array}{c}{}[x_4,x_1]=ax_3, [x_4,x_3]=x_1,\\
    {}[x_3,x_1]=x_2\end{array} & & a\in\F^* & a/b\in \F^2\\
  \hline
M^{11}_{a,b}& \begin{array}{c}{}[x_4,x_1]=x_1,[x_4,x_2]=bx_2\\
  {}[x_4,x_3]=(1+b)x_3, \\{}[x_3,x_1]=x_2\\
  {}[x_3,x_2]=ax_1\end{array} &
  2 & \begin{array}{c}
    a\in\F\setminus\{0\}\\b\in\F\setminus\{1\}\end{array} & 
    \begin{array}{c}
      a/c\in\F^2\\(\delta^2+(b+1)\delta+b)/c\in\F^2\\
     \mbox{where }\delta=(b+1)/(d+1)\end{array}\\
   \hline
\end{array}
$$}


\medskip

\noindent{\bf Group 3: Metabelian Lie algebras with one-dimensional derived subalgebra.}
Let $L$ be a 4-dimensional metabelian Lie algebra such that $\dim L'=1$. Then
$L$ is isomorphic to one of the following algebras.
{\small$$
\begin{array}{|c|c|c|c|c|}
  \hline
  \mbox{Name} & \mbox{relations} & \mbox{char} & \mbox{parameter} & \mbox{isomorphism}\\
  \hline
  M^4& [x_4,x_2]=x_3, [x_4,x_3]=x_3 & & & \\
  \hline
  M^5 & [x_4,x_2]=x_3 & & &\\
  \hline
\end{array}
$$}

\medskip

\noindent{\bf Group 4: Metabelian Lie algebras with three-dimensional
derived subalgebra.}
Suppose that $L$ is a 4-dimensional metabelian Lie algebra with $\dim L'=3$.
Then $L$ is isomorphic to one of the following Lie algebras.
{\small$$
\begin{array}{|c|c|c|c|c|}
  \hline
  \mbox{Name} & \mbox{relations} & \mbox{char} & \mbox{parameter} & \mbox{isomorphism}\\
  \hline
  M^2&
  \begin{array}{c}
    {}[x_4,x_1]=x_1, [x_4,x_2]=x_2,\\
    {}[x_4,x_3]=x_3\end{array} &  & & \\\hline
    M^3_a&
    \begin{array}{c}{}
      [x_4,x_1]=x_1, [x_4,x_2]=x_3,\\
      {}[x_4,x_3]=-ax_2+(a+1)x_3\end{array} & & a\in\F^* & a=b\\\hline
      M^6_{a,b}& \begin{array}{c}{}[x_4,x_1]=x_2, [x_4,x_2]=x_3,\\
        {}[x_4,x_3]=ax_1+bx_2+x_3\end{array} & &
\begin{array}{c}
  a\in\F^*\\
  b\in\F\end{array} &
  \begin{array}{c}
  a=c\\
  b=d\end{array}\\\hline
  M^7_{a,b}&[x_4,x_1]=x_2, [x_4,x_2]=x_3, [x_4,x_3]=ax_1+bx_2 & &
  \begin{array}{c}
  a\in\F^*\\
  b\in\F\end{array} &
  \begin{array}{c}
  a=\alpha^3c\\
  b=\alpha^2d\\
  \mbox{with $\alpha\in\F^*$}
  \end{array}\\
  \hline
\end{array}
$$}

\medskip

\noindent{\bf Group 5: Metabelian Lie algebras with two-dimensional
  derived subalgebra and non-trivial center.}
Let $L$ be a 4-dimensional metabelian Lie algebra with $\dim L'=2$ and
$\dim Z(L)\neq 0$. Then $L$ is isomorphic to one of the following algebras.
{\small$$
\begin{array}{|c|c|c|c|c|}
  \hline
  \mbox{Name} & \mbox{relations} & \mbox{char} & \mbox{parameter} & \mbox{isomorphism}\\
  \hline
  M^3_0& [x_4,x_1]=x_1, [x_4,x_2]=x_3, [x_4,x_3]=x_3& & & \\\hline
  M^6_{0,b}& [x_4,x_1]=x_2, [x_4,x_2]=x_3, [x_4,x_3]=bx_2+x_3 & & b\in\F &
  b=c\\\hline
  M^7_{0,b}& [x_4,x_1]=x_2, [x_4,x_2]=x_3, [x_4,x_3]=bx_2 & & b\in\F &
  \begin{array}{c}
    b=\alpha^2 c\\
    \mbox{with $\alpha\in\F^*$}
  \end{array}\\\hline
\end{array}
  $$}


\medskip

\noindent{\bf Group 6: Metabelian Lie algebras with two-dimensional
  derived subalgebra and trivial center.}
Let $L$ be a Lie algebra over a field $\F$ such that $\dim L'=2$ and $Z(L)=0$.
Then $L$ is isomorphic to one of the following Lie algebras.
{\small$$
\begin{array}{|c|c|c|c|c|}
  \hline
  \mbox{Name} & \mbox{relations} & \mbox{char} & \mbox{parameter} & \mbox{isomorphism}\\
  \hline
M^8& [x_1,x_2]=x_2, [x_3,x_4]=x_4 & & & \\\hline
M^9_a& \begin{array}{c}
  {}[x_4,x_1]=x_1+ax_2, \\{}[x_4,x_2]=x_1,\\
  {}[x_3,x_1]=x_1, \\{}[x_3,x_2]=x_2\end{array} &
&  \begin{array}{c}
      T^2-T-a\in\F[T]\\
      \mbox{is irreducible}\end{array}
  & \begin{array}{c}
      \mbox{if $\char\F\neq 2$}\\
      a+1/4=\alpha^2(b+1/4)\\
      \mbox{with $\alpha\in\F^*$}
  \\ \ \\
  \mbox{if $\ch\F=2$}\\
  T^2+T+a+b\in\F[T]\\
  \mbox{is reducible}\end{array}
 \\\hline
 M^{13}_0& \begin{array}{c}{}[x_4,x_1]=x_1, \\{}[x_4,x_2]=x_2,\\
   {}[x_4,x_3]=x_1, \\{}[x_3,x_1]=x_2\end{array}
& \neq 2 &  & \\\hline
   M^{10}_a & \begin{array}{c}{}[x_4,x_1]=x_2, [x_4,x_2]=ax_1, \\
     {}[x_3,x_1]=x_1, [x_3,x_2]=x_2\end{array}
     & 2 & \begin{array}{c}
       a\not\in\F^2\\
       \mbox{or $a=0$}\end{array}&
\begin{array}{c}
  Y^2+bX^2+a\\
  \mbox{has solution}\\
  (X,Y)\in\F\times\F\\
\mbox{with $X\neq 0$}\end{array}\\\hline
\end{array}
$$}

In \cite[Section 5.5]{deGraaf}, it is shown that if $\ch \F=2$
and $a\in \F^2\setminus\{0\}$ then $M^{10}_a\cong M^{13}_0$.
For this reason, the
Lie algebras $M^{10}_a$ are considered only for $a\notin \F^2$ or
 for $a=0$.
Similarly, we will consider the Lie algebra $M^{13}_0$ only in
characteristic different from two
and in the family
$M^{10}_a$ we allow the parameter $a=0$. 



\medskip

\noindent{\em The proof of Theorem  \ref{theorem1}.}
We prove Theorem  \ref{theorem1} by verifying a sequence of claims.
Before the first claim, recall that the Frobenius semiradical $F(L)$ of
a Lie algebra $L$ was defined in Section  \ref{section:prelim}.
\medskip

\noindent{\em Claim 1.}
Suppose that $L$ is a Lie algebra in Group  6
over a field $\F$ of characteristic  zero. Then $F(L)=0$ and $Z(U(L))=\F$.

\medskip

\noindent{\em The proof of Claim 1.}
Suppose that $L$ is a Lie algebra in Group  6 given by the
basis $\{x_1,x_2,x_3,x_4\}$ and let
$\{\varphi_1,\varphi_2,\varphi_3,\varphi_4\}$ be the dual basis of $L^*$.
Consider the following linear forms on $L$:
\begin{eqnarray*}
  L=M^8&:&\quad f=\varphi_2+\varphi_4;\\
  L=M^9_a&:&\quad f=\varphi_1+\varphi_2;\\
  L=M^{13}_0&:&\quad f=\varphi_2;\\
  L=M^{10}_a&:&\quad f=\varphi_2.\\
\end{eqnarray*}
Then, in each of the cases, the alternating bilinear form 
$$
B_f:L\times L\rightarrow \F,\quad (x,y)\mapsto f([x,y])
$$
is non-degenerate. This implies that the index $i(L)$,
introduced in Section  \ref{section:prelim}, is equal to
zero, and hence $F(L)=0$.
By Lemma~\ref{lemma8}, this shows that $Z(U(L))=\F$.\hfill$\Box$

\medskip

\noindent{\em Claim 2.} Suppose that $L$ and $H$ are 4-dimensional
solvable Lie algebras over a field of  characteristic  zero such that $L$ and
$H$ belong to distinct groups in the group division given above. Then
$U(L)\not\cong U(H)$. 

\medskip

\noindent{\em The proof of Claim 2.}
Lemma~\ref{lemmausefi2} implies that the isomorphism type of $U(L)$ determines
whether $L$ is abelian, whether $L$ is metabelian, and, if $L$ is metabelian,
it determines $\dim L'$. Hence we may assume without loss of generality that
$L$ is in Group  5 and $H$ is in Group  6. In this case, $Z(L)\neq 0$, and so
$Z(U(L))\neq \F$. On the other hand, Claim  1 implies that $Z(U(H))=\F$.
Therefore $U(L)\not\cong U(H)$.\hfill$\Box$

\medskip

\noindent
    {\em Claim 3.}
    If $L$ and $H$ are two Lie algebras in Group  2 over a field 
    $\F$ of characteristic different from 2
    such that
    $U(L)\cong U(H)$, then $L\cong H$.

\medskip

\noindent{\em The proof of Claim 3.}
First note that the Lie algebras $M^{11}_{a,b}$ only exist in characteristic two
and hence we can disregard them in this arguement.
Suppose that $L$ and $H$ are two
Lie algebras in Group  2 
    such that
    $U(L)\cong U(H)$ and assume that $\alpha:U(L)\rightarrow U(H)$ is
    an isomorphism. Note that $L'$ and $H'$ are ideals of codimension
    one in $L$ and in $H$, respectively. Therefore, there are elements
    $x\in L$ and $y\in H$ such that $L=L'\rtimes \left<x\right>$ and
    $H=H'\rtimes\left<y\right>$.  Thus
    Corollary  \ref{corollary5}(2) implies that
    $L/L''\cong H/H''$. We have that
    
\begin{eqnarray*}
  M^{12}/(M^{12})''&\cong& L^2;\\
  M^{13}_a/(M^{13}_a)''&\cong& L^3_a;\\
  M^{14}_a/(M^{14}_a)''&\cong& L^4_a;
\end{eqnarray*} where $L^2,L^3_a$ and $L^4_a$ are the Lie algebras that appear in Theorem~\ref{th:3dim}.
Hence, by Theorem  \ref{th:3dim}, if $L\not\cong H$, then
$L/L''\not\cong H/H''$. Thus it follows that  $L\cong H$.
\hfill$\Box$

\medskip

The following claim addresses Group  3.

\medskip

\noindent
    {\em Claim 4.}
    $U(M^4)\not\cong U(M^5)$ holds over an arbitrary field $\F$.
    \medskip
    
\noindent
    {\em The proof of Claim  4.}
This follows at once from  Lemma~\ref{lemmausefi2}(2), as
    $M^4$ is non-nilpotent, but $M^5$ is nilpotent.
    \hfill$\Box$

\medskip

\noindent{\em Claim  5.}
If $L$ and $H$ are Lie algebras in Group  4 such that $U(L)\cong U(H)$,
then $L\cong H$ holds over an arbitrary field $\F$.

\medskip

\noindent{\em The Proof of Claim  5.}
We have that $L=L'\rtimes\left<x_4\right>$ and $H=H'\rtimes\left<x_4\right>$
and that $L'$ and $H'$ are abelian. Hence if $U(L)\cong U(H)$, then
Corollary  \ref{corollary5} implies that $L\cong H$.
\hfill$\Box$

\medskip

\noindent{\em Claim  6.}
If $\ch\F=0$ and $L$ and $H$ are Lie algebras of  Group  5 such that $U(L)\cong U(H)$, then $L\cong H$.

\medskip

\noindent{\em The proof of Claim  6.}
For a Lie algebra $L$ in Group  5, let $\lie{L'}$ denote the Lie algebra
 constructed in Proposition~\ref{prop:Ltilde}
with respect to the abelian ideal $M=L'\unlhd L$
and the maximal ideal $\m=\w(L)\unlhd U(L)$. If $L$ and $H$ are two
Lie algebras in Group  5 and $U(L)\cong U(H)$, then by Corollary~\ref{cor:isom} $\lie{L'}\cong \lie{H'}$.


In Section  \ref{sec:examp}, we determined that $\lie{L'}$ is 3-dimensional if 
$L=M^3_0$, while it is 4-dimensional, if
$L=M^6_{0,b}$ or $L=M^7_{0,b}$ with $b\neq 0$. Moreover, the
algebras $\lie{L'}$ constructed for
$M^3_0$, $M^6_{0,0}$ $M^7_{0,0}$ are pairwise non-isomorphic.
Thus, 
 the possible non-isomorphic algebras with isomorphic   
 enveloping algebras must be isomorphic to  $M^6_{0,b}$ or $M^7_{0,c}$ with some
 $b,c\in\F$.
 Let us hence assume that $L$ and $H$ are Lie algebras
 that are isomorphic to either $M^6_{0,b}$ or $M^7_{0,c}$ with some
 $b,\ c\in\F$ such that $U(L)\cong U(H)$.
 Note that $L=M\rtimes\langle x_4\rangle$ and $H=N\rtimes\langle
 x_4\rangle$, where $M=L'+Z(L)$ and $N=H'+Z(H)$. Now
 Theorem~\ref{th:lprimezl} implies that $L\cong H$.
 \hfill$\Box$

\medskip

\noindent{\em Claim  7.}
If $L$ and $H$ are  Lie algebras in Group  6 over an arbitrary field,
such that $U(L)\cong U(H)$, then $L\cong H$. 

\medskip

\noindent{\em The proof of Claim  7.}
Suppose that $L$ and $H$ are as in the claim, and let $\alpha:U(L)\rightarrow
U(H)$ be an isomorphism.
We may assume by Lemma~\ref{lemma7} that $\alpha(\w(L))=\w(H)$.
Let us compute $\lie{L'}$ and $\lie{H'}$ as in
Proposition~\ref{prop:Ltilde} using the abelian ideals
$L'$ and $H'$, respectively.
Note that both $L$ and $H$ can be written as
$\langle x_1, x_2\rangle\rtimes \langle x_3,x_4\rangle$
where $\left<x_1,x_2\right>$ is the derived subalgebra.
Further, the second component of this semidirect sum decomposition acts
faithfully on the first component, and hence
Corollary~\ref{corollary2}(1) shows that
$\lie{L'}\cong L$ and $\lie{H'}\cong H$.
On the other hand, Theorem~\ref{lemma6} implies that
$\lie{L'}\cong \lie{H'}$. Thus $L\cong H$, as claimed.
\hfill$\Box$

\smallskip

Our arguments in the proof of Theorem~\ref{theorem1} work over
arbitrary characteristic, except for some parts of the proofs of
Claims~2, 3, and~6.  In Claim~2, the argument in prime characteristic
fails if $L$ is in Group~5 and $H$ is in Group~6. Hence in order to extend
Theorem~\ref{theorem1} to fields of arbitrary characteristic, one would
only need to prove the following statements.

\begin{conjecture}\label{conj}
    (1) If $L$ and $H$ are Lie algebras over a field of positive characteristic such that $L$ belongs to Group~5, $H$ belongs to Group~6 and $U(L)\cong U(H)$, then $L\cong H$.

(2)  If $L$ and $H$ are Lie algebras over a field
  of  characteristic two that belong to Group~2 such that
  $U(L)\cong U(H)$, then $L\cong H$.

  (3)  If $L$ and $H$ are Lie algebras over a field
  of positive characteristic that belong to Group~5 such that
  $U(L)\cong U(H)$, then $L\cong H$.
  \end{conjecture}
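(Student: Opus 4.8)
\smallskip

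The plan is to isolate, and then replace, the single characteristic-zero ingredient that breaks down in positive characteristic. That ingredient is Lemma~\ref{lemma8}(3): over a field of characteristic $p$ the centre $Z(U(L))$ is a large commutative algebra (it contains a non-zero polynomial expression in every $x\in L$), so it no longer controls the Frobenius semiradical, and with it we lose the only handle we had on $Z(L)$ and on $L'+Z(L)$. Everything else remains available: the constructions of $\liealg{M}{\m}$ and $\assoalg{M}{\m}$, their invariance under isomorphisms of $U(L)$ (Theorem~\ref{lemma6} and Corollary~\ref{cor:isom}), and the reductions of Claims~3--7 are all valid over an arbitrary field, and the computations of Section~\ref{sec:examp} are characteristic-free. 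These are the tools I would build on.

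For part~(2) the method of Claim~3 already disposes of $M^{12}$, $M^{13}_a$ and $M^{14}_a$ in every characteristic: from $L=L'\rtimes\langle x\rangle$ and Corollary~\ref{corollary5}(2) one gets $L/L''\cong H/H''$, and Theorem~\ref{th:3dim}, read in characteristic two, separates $L^2,L^3_a,L^4_a$ and recovers the parameters (for $L^4_a$ up to square class, matching the isomorphism column of Group~2). The genuinely new family is $M^{11}_{a,b}$, which exists only in characteristic two; here $(M^{11}_{a,b})''=\langle x_1,x_2\rangle$ and $\dim L/L''=2$, so $L/L''$ already separates $M^{11}_{a,b}$ from the other three families but retains almost none of the parameters $a,b$. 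I would therefore recover these parameters from the automorphism-invariant Lie algebra $\lie{L''}$ attached to the abelian ideal $M=L''$, which records the action of the two-dimensional quotient $L/C_L(L'')$ on $\langle x_1,x_2\rangle$ and hence the square class of $a$ together with $b$; this requires first checking, in the spirit of Lemma~\ref{lemma7}, that an isomorphism $U(L)\cong U(H)$ carries $L''U(L)$ onto $H''U(H)$, after which Theorem~\ref{lemma6} applies. The main care needed is arithmetic, since the isomorphism conditions in characteristic two are governed by Artin--Schreier polynomials $T^2+T+\cdots$ rather than by square classes.

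Parts~(1) and~(3) share a single difficulty, visible already over $\mathbb{Q}$. The invariants $\lie{L'}$ and $\asso{L'}$ do not separate $M^6_{0,b}$ from $M^7_{0,c}$ whenever $b+\tfrac14$ and $c$ lie in one square class, since then $\asso{L'}\cong\F[x]/(x^3-x^2-bx)$ and $\F[x]/(x^3-cx)$ are isomorphic by Lemma~\ref{lem:comalg}; this is exactly why Theorem~\ref{th:lprimezl} had to go through the Frobenius semiradical. The obstruction is not a shortage of invariants of $L$ itself: on the abelian ideal $L'+Z(L)$ the operator $\ad x_4$ has trace $1$ for $M^6$ and trace $0$ for $M^7$, and ``trace zero or not'' is unchanged by the only freedom an isomorphism of Lie algebras allows here, namely rescaling $x_4$. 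The same trace datum separates a four-dimensional $\lie{L'}$ of a Group~5 algebra from the algebra $\lie{H'}\cong H$ of a Group~6 member, which would settle part~(1). What $\assoalg{M}{\m}$ discards is precisely this datum: passing to $\F[x]/(f)$ permits an affine change of the generator (completing the square) that removes the linear term of $f$, whereas the Lie bracket only permits scaling of $x_4$.

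The crux of the whole conjecture is therefore to supply a positive-characteristic substitute for the containment $Z(U(L))\subseteq U(F(L))$, strong enough to recover the abelian ideal $L'+Z(L)$, and the action of $\ad x_4$ on it up to scaling, from $U(L)$ alone. I would attempt this along two lines: first, by analysing the finitely generated commutative algebra $Z(U(L))$ together with the $p$-centre, whose associated affine variety is known to reflect the coadjoint geometry of $L$ and should encode the eigenvalue configuration of $\ad x_4$; and second, by enriching the section construction of Section~\ref{sec:ltilde} so that the resulting invariant remembers the normalisation of $\ad x_4$ rather than only the isomorphism type of $\F[x]/(f)$. The difficulty is sharpest in characteristic two, where even the trace can degenerate (for instance $M^7_{0,c}$ and the Group~6 algebra $M^{10}_0$ can then have isomorphic $\lie{L'}$), so that recovering $Z(L)$ seems to demand genuinely new input; this is why we record the three statements only as a conjecture.
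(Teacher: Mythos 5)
You were asked to prove a statement that the paper itself does not prove: it is stated as Conjecture~\ref{conj} precisely because, as the authors say, its verification ``is beyond the means of the current paper''; the only fragment established there is that part~(3) holds when both algebras lie in the single family $M^7_{0,b}$ (Section~\ref{subsec:m7} together with Lemma~\ref{lem:comalg}). Your proposal, by its own concluding sentence, also proves none of the three parts, so judged as a proof it has a genuine gap --- indeed the gap \emph{is} the conjecture. That said, your diagnosis coincides with, and in places sharpens, the paper's own discussion: the sole characteristic-zero ingredient is the containment $Z(U(L))\subseteq U(F(L))$ of Lemma~\ref{lemma8}(3), which fails in characteristic $p$ because of the $p$-centre, and without it one cannot transport $Z(L)$ and hence cannot locate the ideal $L'+Z(L)$ needed for Theorem~\ref{th:lprimezl}. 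Your observation that $\lie{L'}$ and $\asso{L'}$ genuinely cannot separate $M^6_{0,b}$ from $M^7_{0,c}$ (they forget exactly the trace of $\ad x_4$ on $L'+Z(L)$, which is the datum distinguishing the two families up to rescaling of $x_4$), and that in characteristic two even this trace degenerates (e.g.\ $M^7_{0,c}$ with $c$ a nonzero square and $M^{10}_0$ have isomorphic $\lie{L'}$), is correct and explains why no rearrangement of the paper's existing tools can settle parts~(1) and~(3).

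The one place where you attempt an actual argument is part~(2), and there the missing step is concrete. Your treatment of $M^{12}$, $M^{13}_a$, $M^{14}_a$ via Corollary~\ref{corollary5}(2) and Theorem~\ref{th:3dim} does go through in characteristic two (a genuine, if modest, extension of the paper's Claim~3, which simply excludes that characteristic). Moreover, if one grants your unproved assertion that every isomorphism $U(L)\to U(H)$ carries $L''U(L)$ onto $H''U(H)$, the rest would indeed work: for $L=M^{11}_{a,b}$ one has $C_L(L'')=L''$ with $L/C_L(L'')$ two-dimensional acting faithfully on the two-dimensional ideal $L''$, so Corollary~\ref{corollary2}(1) gives $\lie{L''}\cong L$, and Theorem~\ref{lemma6} would then recover $M^{11}_{a,b}$ up to isomorphism. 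But that invariance is not obtainable ``in the spirit of Lemma~\ref{lemma7}'': unlike $L'U(L)$, the ideal $L''U(L)$ has no commutator description, because the two-sided ideal generated by $[L'U(L),L'U(L)]$ is in general strictly larger. For instance, in $U(M^4)$ one has $L''=0$, yet $[[x_4,x_2],[x_4,x_2x_4]]=[x_3,x_3x_4]=-x_3^2\neq 0$, so the ideal generated by double commutators (equivalently, by commutators of elements of $L'U(L)$) is nonzero while $L''U(L)=0$. This is also why Lemma~\ref{lemmausefi2}(3) only yields the quotient $L'/L''$ and not the ideal $L''U(L)$ itself. Until this invariance, or some substitute for it, is established, part~(2) remains unproved as well.
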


The calculations in Section~\ref{subsec:m7} and Lemma~\ref{lem:comalg} imply
that assertion~(3) of Conjecture~\ref{conj} is true if both $L$ and $H$ are of the form
$M_{0,b}^7$ for some $b\in\F$ and $\F$ is either of characteristic different from
two or perfect.


\begin{thebibliography}{CPR-NW}

\bibitem[CPR-NW]{quasiisom}
  Ricardo Campos, Dan Petersen, Daniel Robert-Nicoud, and Felix Wierstra.
  \newblock Lie, associative and commutative quasi-isomorphism.
  \newblock Available as arXiv:1904.03585.
  
\bibitem[CKL04]{Chun}
Jang-Ho Chun, Takeshi Kajiwara, and Jong-Sook Lee.
\newblock Isomorphism theorem on low dimensional {L}ie algebras.
\newblock {\em Pacific J. Math.}, 214(1):17--21, 2004.

\bibitem[dG05]{deGraaf}
W.~A. de~Graaf.
\newblock Classification of solvable {L}ie algebras.
\newblock {\em Experiment. Math.}, 14(1):15--25, 2005.

\bibitem[Dix96]{Dixmier}
Jacques Dixmier.
\newblock {\em Alg\`ebres enveloppantes}.
\newblock Les Grands Classiques Gauthier-Villars. [Gauthier-Villars Great
  Classics]. \'Editions Jacques Gabay, Paris, 1996.
\newblock Reprint of the 1974 original.

\bibitem[EO03]{Ooms2}
Alexander~G. Elashvili and Alfons~I. Ooms.
\newblock On commutative polarizations.
\newblock {\em J. Algebra}, 264(1):129--154, 2003.



\bibitem[Her01]{H} Martin Hertweck, A counterexample to the isomorphism problem for integral group 
rings, {\em Ann. of Math. (2)} {\bf 154} (2001), no. 1, 115--138.

\bibitem[Hum78]{Humphreys}
James~E. Humphreys.
\newblock {\em Introduction to {L}ie algebras and representation theory},
  volume~9 of {\em Graduate Texts in Mathematics}.
\newblock Springer-Verlag, New York-Berlin, 1978.
\newblock Second printing, revised.

\bibitem[Jac44]{Jacobson}
N.~Jacobson.
\newblock Schur's theorems on commutative matrices.
\newblock {\em Bull. Amer. Math. Soc.}, 50:431--436, 1944.

\bibitem[Jac79]{Jacobsonbook}
Nathan Jacobson.
\newblock {\em Lie algebras}.
\newblock Dover Publications, Inc., New York, 1979.
\newblock Republication of the 1962 original.

\bibitem[Mal92]{Malcolmson}
P.~Malcolmson.
\newblock Enveloping algebras of simple three-dimensional {L}ie algebras.
\newblock {\em J. Algebra}, 146(1):210--218, 1992.

\bibitem[MR01]{McConnell}
J.~C. McConnell and J.~C. Robson.
\newblock {\em Noncommutative {N}oetherian rings}, volume~30 of {\em Graduate
  Studies in Mathematics}.
\newblock American Mathematical Society, Providence, RI, revised edition, 2001.
\newblock With the cooperation of L. W. Small.

\bibitem[Nag72]{Nagata}
Masayoshi Nagata.
\newblock {\em On automorphism group of {$k[x,\,y]$}}.
\newblock Kinokuniya Book-Store Co., Ltd., Tokyo, 1972.
\newblock Department of Mathematics, Kyoto University, Lectures in Mathematics,
  No. 5.

\bibitem[Oom04]{Ooms1}
Alfons~I. Ooms.
\newblock The {F}robenius semiradical of a {L}ie algebra.
\newblock {\em J. Algebra}, 273(1):274--287, 2004.

\bibitem[PMS02]{Polcino}
C\'{e}sar Polcino~Milies and Sudarshan~K. Sehgal.
\newblock {\em An introduction to group rings}, volume~1 of {\em Algebra and
  Applications}.
\newblock Kluwer Academic Publishers, Dordrecht, 2002.

\bibitem[RU07]{Usefi}
David Riley and Hamid Usefi.
\newblock The isomorphism problem for universal enveloping algebras of {L}ie
  algebras.
\newblock {\em Algebr. Represent. Theory}, 10(6):517--532, 2007.

\bibitem[RS87]{RS} Klaus Roggenkamp and Leonard Scott, Isomorphisms of $p$-adic group rings,
{\em  Ann. of Math. (2)} {\bf 126} (1987), no. 3, 593--647.

\bibitem[SU11]{Csaba}
Csaba Schneider and Hamid Usefi.
\newblock The isomorphism problem for universal enveloping algebras of
  nilpotent {L}ie algebras.
  \newblock {\em J. Algebra}, 337:126--140, 2011.

\bibitem[Use15]{usefi-survey}
  Hamid Usefi.
  \newblock
  Isomorphism invariants of enveloping algebras.
  \newblock In {\em Noncommutative rings and their applications},
  volume 634 of {\em Contemp. Math.}, pages 253–265.
  \newblock Amer. Math. Soc., Providence, RI, 2015.

\bibitem[Whi68]{Whitcomb}
Albert Whitcomb.
\newblock {\em The Group Ring Problem}.
\newblock PhD thesis, The University of Chicago, 1968.

\bibitem[Wei88]{W} Alfred Weiss, Rigidity of $p$-adic $p$-torsion,
{\em Ann. of Math. (2)} {\bf 127} (1988), no. 2, 317--332.

\end{thebibliography}
\end{document}